\newcommand{\acts}{\; \rotatebox[origin=c]{-90}{$\circlearrowright$} \;}
\setlist[itemize]{noitemsep, topsep=0pt}
\setlist[enumerate]{itemsep=0mm, topsep=0pt}
\renewenvironment{proof}[1][\proofname]{\par
  \pushQED{\qed}%
  \normalfont \topsep\z@skip 
  \trivlist
  \item[\hskip\labelsep
        \itshape
    #1\@addpunct{.}]\ignorespaces
}{%
  \popQED\endtrivlist\@endpefalse
}
\def\thm@space@setup{\thm@preskip=10pt
\thm@postskip=0pt}
\newtheorem{Lthm}{Theorem}
\newtheorem{Lconj}[Lthm]{Conjecture}
\newtheorem{theorem}{Theorem}
\numberwithin{theorem}{section}
\newtheorem{corollary}[theorem]{Corollary}
\newtheorem{lemma}[theorem]{Lemma}
\newtheorem{prop}[theorem]{Proposition}
\theoremstyle{definition}
\newtheorem{remark}[theorem]{Remark}
\newtheorem{example}[theorem]{Example}
\newcommand{\Q}{\mathbb{Q}}
\newcommand{\Z}{\mathbb{Z}}
\renewcommand{\O}{\mathcal{O}}
\newcommand{\Alb}{\mathrm{Alb}}
\newcommand{\id}{\mathrm{id}}
\DeclareMathOperator{\Spec}{\mathrm{Spec}}
\newcommand{\can}{\text{can}}
\DeclareSymbolFont{kplargesymbols}{OMX}{jkp}{m}{n}
\DeclareMathAccent{\wt}{\mathalpha}{kplargesymbols}{"65}
\DeclareMathAccent{\wtt}{\mathord}{largesymbols}{"65}
\newsavebox{\pullbacksymbol}
\sbox\pullbacksymbol{%
\begin{tikzpicture}%
\draw (0,0) -- (1ex,0ex);%
\draw (1ex,0ex) -- (1ex,1ex);%
\end{tikzpicture}}
\newsavebox{\pushoutsymbol}
\sbox\pushoutsymbol{%
\begin{tikzpicture}%
\draw (0,0) -- (-1ex,0ex);%
\draw (-1ex,0ex) -- (-1ex,-1ex);%
\end{tikzpicture}}
\newcommand{\reg}{\mathrm{reg}}
\newcommand{\struct}[1]{\O_{#1}}
\newcommand{\embed}{\hookrightarrow}
\newcommand{\onto}{\twoheadrightarrow}
\newcommand{\fPic}{\mathrm{Pic}}
\newcommand{\cA}{\mathcal{A}}
\newcommand{\etale}{\'{e}tale\xspace}
\numberwithin{equation}{section}
\newcommand*{\TikzArrow}[1][]{\mathbin{\tikz [baseline=-0.25ex,-latex,#1] \draw [#1] (0pt,0.5ex) -- (1.3em,0.5ex);}}%
\newcommand*{\TikzBiArrow}[1][]{\mathbin{\tikz [baseline=-0.25ex,-latex, #1] \draw [#1] (0pt,0.5ex) -- (1.3em,0.5ex) node[midway, above, label={[label distance=-0.4cm]$\sim$}] {} ;}}
\newcommand{\rat}{\TikzArrow[->, densely dashed]}
\newcommand{\birat}{\TikzBiArrow[->, densely dashed]}
\newcommand{\iso}{\TikzBiArrow[->]}
\thanks{During the preparation of this article, the first author was partially supported by an NSF postdoctoral fellowship DMS-2103099 and the third author was supported by the Research
Foundation Flanders (FWO) Grant no. 1280421N “Topology, birational geometry and
vanishing theorems for complex algebraic varieties”.}
\subjclass[2020]{Primary: 14J10. Secondary: 14D06, 14K05.}
\begin{document}

\title{Nowhere vanishing holomorphic one-forms and fibrations over abelian varieties}

\author{Nathan Chen, Benjamin Church, and Feng Hao}

\maketitle


\section{Introduction}

A remarkable theorem of Popa and Schnell \cite{PS14} shows that any holomorphic one-form on a smooth complex projective variety of general type must vanish at some point. This result was the culmination of many papers \cite{Carrell74, GL87, Zhang97, LZ05, HK05} which explored the interactions between one-forms, Hodge theory, and birational geometry (see also \cite{Wei2020, Villadsen21} for some reformulations and generalizations). One way of rephrasing the theorem of Popa and Schnell is that the presence of nowhere-vanishing one-forms on a variety $X$ bounds its Kodaira dimension $\kappa(X)$. From this perspective, it is natural to ask: given a variety $X$ of intermediate Kodaira dimension, what can we say about its geometry if $X$ carries nowhere-vanishing one-forms?

The goal of this paper is to give a partial answer to this question. For a smooth complex projective variety $X$, consider the following property:
\[
\let\scriptstyle\textstyle \substack{ \text{$X$ admits holomorphic one-forms $\omega_{1}, \ldots, \omega_{g} \in H^{0}(X, \Omega^{1}_{X})$ which are} \\ \text{pointwise linearly independent (PLI), and hence nowhere vanishing.}} \tag{$\ast$}
\]
By \cite[\S4]{PS14}, if $X$ satisfies $(\ast)$ then $\kappa(X) \le \dim{X} - g$. In this paper, we will explore what happens when the Kodaira dimension is maximized. Our first result gives a structure theorem for such varieties when they are minimal:

\begin{Lthm}\label{thm:main}
Let $X$ be a smooth minimal projective variety with $\kappa(X) = \dim X - g$. Then $X$ satisfies $(\ast)$ if and only if $X$ admits a smooth morphism to an abelian variety $A$ of dimension $g$. In this case, $X \cong (A' \times Z) / G$ with a diagonal action, where $A' \to A$ is an isogeny with kernel $G$ and $Z$ is a smooth minimal model of general type.
\end{Lthm}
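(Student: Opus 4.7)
$(\Leftarrow)$ If $f : X \to A$ is smooth onto an abelian variety of dimension $g$, then $\Omega^1_A$ is trivial of rank $g$ and the cotangent pullback $f^*\Omega^1_A \hookrightarrow \Omega^1_X$ is a subbundle inclusion, so the pullbacks of a basis of $H^0(A, \Omega^1_A)$ give $g$ pointwise linearly independent one-forms on $X$.

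$(\Rightarrow)$ Given PLI forms $\omega_1, \dots, \omega_g \in H^0(X, \Omega^1_X)$, write $\omega_i = a^*\eta_i$ via the Albanese $a : X \to \mathrm{Alb}(X)$. The span of the $\eta_i$ is dual to a surjection $\mathrm{Alb}(X) \twoheadrightarrow A$ onto an abelian variety of dimension $g$, and the composite $f : X \to A$ has pointwise surjective differential (precisely the PLI condition), so $f$ is a smooth submersion and hence a smooth surjection by properness. This establishes the biconditional of the theorem.

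For the structural conclusion, let $F$ be a general fiber of $f$, of dimension $\dim X - g = \kappa(X)$. Since $K_A \cong \mathcal{O}_A$ and $f$ is smooth, $K_X = K_{X/A}$, and the minimality of $X$ descends to $F$ via $K_F = K_X|_F$. The first substantive claim is that $F$ is of general type. To establish this, I plan to apply the Chen--Jiang decomposition theorem to the pluricanonical direct images $f_*\omega_{X/A}^{\otimes m}$: on the abelian variety $A$, these split as direct sums of pullbacks of M-regular sheaves twisted by torsion line bundles. This yields sharp control on the asymptotic growth of $h^0(X, mK_X) = h^0(A, f_*\omega_{X/A}^{\otimes m})$ in terms of the rank $h^0(F, mK_F)$, and the hypothesis $\kappa(X) = \dim X - g$ then forces $\kappa(F) = \dim F$.

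Once $f$ is a smooth proper family of minimal varieties of general type over an abelian variety, hyperbolicity results of Viehweg--Zuo / Popa--Schnell type force $f$ to be isotrivial: an abelian variety is not of log-general type, so it cannot be the base of a smooth family of canonically polarized manifolds with positive variation. Isotriviality together with the rigidity of minimal models of general type then upgrade to an isomorphism $X \times_A A' \cong A' \times Z$ for some isogeny $\pi : A' \to A$, where $Z$ is any fiber. The finite group $G = \ker \pi$ acts diagonally --- by translation on $A'$, by automorphism on $Z$ --- yielding $X \cong (A' \times Z)/G$.

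The main obstacle I anticipate is sharpening Iitaka's easy-addition bound $\kappa(F) \geq \dim F - g$ to the equality $\kappa(F) = \dim F$. This seems to require invoking generic-vanishing / Chen--Jiang-type decompositions for pluricanonical pushforwards, and the minimality hypothesis enters essentially by ensuring $K_{X/A}$ is nef so that one can both apply the decomposition cleanly and transfer minimality to the fibers.
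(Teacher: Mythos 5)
The backward implication is fine, but the forward implication has a genuine gap at its very first step, and it is precisely the difficulty the whole paper is organized around. You assert that the span $W$ of the forms $\eta_1,\dots,\eta_g\in H^0(\Alb_X,\Omega^1_{\Alb_X})$ with $\omega_i=a^*\eta_i$ ``is dual to a surjection $\Alb_X\twoheadrightarrow A$ onto an abelian variety of dimension $g$.'' This is false for a general $g$-dimensional subspace: writing $\Alb_X=V/\Lambda$, a $g$-dimensional quotient abelian variety corresponds to a subspace $W\subset V^*$ whose annihilator $W^\perp\subset V$ meets $\Lambda$ in a full-rank sublattice, and there are only countably many such subspaces, while $W$ can be arbitrary. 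The paper's own Example 2.1 is a counterexample to your claim: for $X=E\times C$ with $\mathrm{Jac}(C)$ having no elliptic quotients, the nowhere-vanishing form $\pi_1^*\omega_E+\pi_2^*\omega_C$ (with $\omega_C\neq 0$) spans a line in $H^0(\Alb_X,\Omega^1)$ that is not pulled back from any elliptic quotient of $\Alb_X$. So you cannot conclude that the $\omega_i$ are pullbacks of a basis of $H^0(A,\Omega^1_A)$ for any $g$-dimensional quotient $A$, and the statement ``pointwise surjective differential is precisely the PLI condition'' then has nothing to apply to. The Kodaira dimension hypothesis must be used in a different way: the paper passes to the Iitaka fibration, shows via \cite[Thm.~2.1]{PS14} that $W\cap f^*H^0(Q_X,\Omega^1_{Q_X})=\{0\}$, deduces from the resulting dimension equalities that the general Iitaka fiber $F$ (of dimension $g$) maps generically finitely onto a translate of a fixed abelian subvariety $B_0\subset\Alb_X$, and obtains the target $B=B_0^\vee$ by dualizing. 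Even then, the morphism $X\to B$ is only surjective at that stage, not smooth, and the given $\omega_i$ need not be pulled back from $B$.

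This reversal of logical order propagates through the rest of your outline. In the paper, smoothness of $X\to B$ is the \emph{last} thing proved: it is deduced from the isomorphism $X\cong(B'\times Z)/G$, which is itself obtained by first proving a birational decomposition over the smooth locus of the Iitaka fibration (using rigidity of torsors over constant abelian schemes, Proposition~\ref{prop:AntiDiagonalQuotient}) and then upgrading the birational map between minimal models to an isomorphism via the $G$-equivariant MMP and a flops argument. You instead take the smooth morphism as the starting point. Granting that starting point, your remaining steps (Chen--Jiang/Meng--Popa to get general type fibers, Viehweg--Zuo or Koll\'ar's $C^+_{n,m}$ for isotriviality, and finiteness of $\mathrm{Aut}(Z)$ plus the Isom-scheme/Serre--Lang argument to split the family after an isogeny with a diagonal action) are plausible and would give a cleaner route to the product structure than the paper's; but the production of the smooth morphism to a $g$-dimensional abelian variety is the actual content of the theorem, and it is missing from your proposal.
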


A related conjecture of Kotschick \cite{Kotschick21} links the existence of nowhere vanishing holomorphic one-forms to the existence of a $C^{\infty}$-fiber bundle structure over $S^{1}$. A finer study of varieties admitting nowhere vanishing holomorphic one-forms in the context of Kotschick's conjecture was initiated by Schreieder \cite{Schreieder21}, where he classifies smooth projective surfaces carrying such forms (see also \cite{Kotschick21} for a different argument and \cite{DHL21, SY22} for related work). In subsequent joint work with the third author \cite[Thm. 1.3]{HS21(1)}, they precisely describe all threefolds which admit nowhere vanishing one-forms. In low dimensions, these results particularly cover Theorem~\ref{thm:main} and the following Theorem~\ref{thm:KodairaDimFiniteCoverProduct}, which removes the minimality assumption. The ``$g=1$'' case of Theorem~\ref{thm:main} was first proved by the third author \cite[Thm. 1.1]{Hao23}.

In our approach, the Kodaira dimension assumption is used to produce some map to a lower dimensional abelian variety in the first place (see Lemma~\ref{non_contraction}). However, we propose that the first conclusion of Theorem~\ref{thm:main} should hold without this assumption on $X$:

\begin{Lconj}
Let $X$ be a smooth minimal model and let $g$ be the \textit{maximal} number of PLI one-forms that $X$ carries. Then there exists a smooth morphism to a $g$-dimensional abelian variety.
\end{Lconj}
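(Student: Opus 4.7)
The natural strategy is to construct $A$ via the Albanese variety. Since every holomorphic one-form on $X$ is pulled back from the Albanese morphism $a \colon X \to \Alb(X)$, the PLI forms $\omega_1, \ldots, \omega_g$ span a $g$-dimensional subspace
\[
V \subset H^0(\Alb(X), \Omega^1_{\Alb(X)}) \cong T^\vee_0 \Alb(X).
\]
Its annihilator $W \coloneqq V^\perp \subset T_0 \Alb(X)$ has codimension $g$. If $W$ is \emph{rational}, in the sense of being the tangent space at $0$ to a closed abelian subvariety $B \subset \Alb(X)$, then $A \coloneqq \Alb(X)/B$ is a $g$-dimensional abelian variety and the composite $f \colon X \to \Alb(X) \to A$ is our candidate.

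Given this setup, smoothness of $f$ comes essentially for free. The pullback $f^\ast \Omega^1_A$ embeds into $\Omega^1_X$ as the trivial subbundle $V \otimes \O_X$ spanned by $\omega_1, \ldots, \omega_g$, and $(\ast)$ is exactly the statement that this inclusion is injective on every fiber; equivalently, $df_x \colon T_xX \to T_{f(x)} A$ is surjective at every $x \in X$. A morphism between smooth varieties with surjective differential everywhere is automatically smooth (miracle flatness guarantees flatness at constant relative dimension).

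The hard step---and the place where the Kodaira-dimension hypothesis of Theorem~\ref{thm:main} enters in the paper, via Lemma~\ref{non_contraction}---is establishing rationality of $W$. My preferred line of attack is through the rank-$g$ subbundle $\F \coloneqq V \otimes \O_X \subset \Omega^1_X$. Since the $\omega_i$ are closed, the annihilator $\G \coloneqq \F^\perp \subset TX$ is an integrable distribution (Frobenius), whose leaves push forward under $a$ into leaves of the linear foliation on $\Alb(X)$ cut out by $W$; rationality of $W$ is precisely the algebraicity of those latter leaves. The expectation is that maximality of $g$ forces this algebraicity: if $W$ were irrational, the analytic closure of the leaf through $0$ would be a strictly larger abelian subvariety $\bar B \supsetneq B$, the quotient $\Alb(X)/\bar B$ would have dimension $< g$, and the ``irrational'' directions remaining in $V$ should---via a density argument on $\Alb(X)$ combined with the polarized integral Hodge structure on $H^1(X, \Z)$---contribute additional PLI one-forms on $X$, contradicting the maximality assumption. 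Once rationality is obtained, the general fiber $F$ of $f$ is smooth minimal of dimension $\dim X - g$, maximality of $g$ should force $F$ to admit no PLI one-forms so that Theorem~\ref{thm:main} yields $\kappa(F) = \dim F$, and subadditivity of Kodaira dimension over the abelian base $A$ lands us in the setting of Theorem~\ref{thm:main} itself, closing the argument.
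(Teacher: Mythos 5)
This statement is Conjecture~B of the paper: the authors explicitly leave it open, so there is no proof in the paper to compare against, and any complete argument here would be new mathematics. Your proposal is not such an argument. The first two steps (defining $V$, $W$, and observing that once $W = T_0 B$ for an abelian subvariety $B$ the induced map $X \to \Alb_X/B$ is automatically smooth because $(\ast)$ says the differential is everywhere surjective) are correct and are indeed the easy part. The entire content of the conjecture is concentrated in the step you label as an ``expectation,'' and the mechanism you propose for it is refuted by the paper's own Example~2.1. Take $X = E \times C$ with $J(C)$ having only simple isogeny factors of dimension $\ge 2$, so the maximal number of PLI forms is $g = 1$, and take the generic nowhere-vanishing form $\omega = \pi_1^*\omega_E + \pi_2^*\omega_C$ with $\omega_C \ne 0$. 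Its annihilator $W \subset T_0(E \times J(C))$ is a codimension-one subspace that is \emph{not} the tangent space of any abelian subvariety: the only codimension-one abelian subvariety of $E \times J(C)$ is $\{0\} \times J(C)$ (up to the hypothesis on the isogeny factors), and its tangent space annihilates $\omega_E$ alone. So $W$ is irrational, yet no additional PLI one-forms exist and $g=1$ remains maximal --- your proposed contradiction never materializes. The conjecture holds for this $X$ only because a \emph{different} form, $\pi_1^*\omega_E$, is the one explained by a smooth morphism. This is precisely the distinction the paper draws in \S2 between a subbundle being ``explained by'' a smooth morphism and ``arising from'' it: any viable strategy must be allowed to discard the chosen $V$ and produce a new PLI subspace, which your argument, pinned to the rationality of the given $W$, cannot do.

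Two smaller issues. First, the paper's actual route in the proven case ($\kappa(X) = \dim X - g$) does not go through foliations at all: it uses the Iitaka fibration and \cite[Thm.~2.1]{PS14} to force the Albanese images of the fibers to be translates of a fixed abelian subvariety (Lemma~\ref{non_contraction}), and the conjecture is exactly about what replaces the Iitaka fibration when that hypothesis is dropped. Second, your closing step asserts that maximality of $g$ forces the fiber $F$ to carry no PLI one-forms and hence to be of general type; the latter implication is the \emph{converse} of Popa--Schnell and is not known (indeed it is essentially Conjecture~B again, one dimension down), so even granting rationality the argument would not close.
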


\noindent Our conjecture is similar in spirit to \cite[Conj.~1.7]{HS21(1)}, although our hope is that the dimension of the abelian variety is more precisely explained by the number of PLI one-forms. As an application of these ideas, we also verify additional cases of a conjecture of the third author \cite[Conj.~1.5]{Hao23}.

\begin{Lthm}\label{thm:smooth_map_to_simpleAV}
Let $f : X \to A$ be a morphism from a smooth minimal projective variety $X$ to a simple abelian variety $A$. If $\kappa(X) \ge \dim{X} - \dim{A}$, then the following are equivalent:
\begin{enumerate}[label={\upshape(\arabic*)}]
\item there exists a holomorphic one-form $\omega \in H^0(A, \Omega^1_A)$ such that $f^{\ast}\omega$ is nowhere vanishing;
\item $f : X \to A$ is smooth.
\end{enumerate}
Furthermore, if this holds then $\kappa(X) = \dim{X} - \dim{A}$ and $f$ is isotrivial with general type fibers.
\end{Lthm}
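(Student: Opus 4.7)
The direction $(2) \Rightarrow (1)$ is immediate. When $f$ is smooth, the relative cotangent sequence
\[
0 \to f^*\Omega^1_A \to \Omega^1_X \to \Omega^1_{X/A} \to 0
\]
has locally free cokernel, so $f^*\Omega^1_A$ sits as a subbundle of $\Omega^1_X$. Since $\Omega^1_A$ is trivial of rank $\dim A$, any nonzero $\omega \in H^0(A, \Omega^1_A)$ pulls back to a nowhere-vanishing section of $f^*\Omega^1_A$, and hence of $\Omega^1_X$.

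For $(1) \Rightarrow (2)$, the plan is to reduce to Theorem~\ref{thm:main}. I would first show $f$ is surjective: non-constancy is immediate from $f^*\omega \ne 0$, and a proper image $f(X) \subsetneq A$ should be excluded by combining simplicity of $A$, the Kodaira dimension hypothesis, and easy addition applied to $X \to f(X)$. The main step is then to promote the single nowhere-vanishing $f^*\omega$ to the assertion that $f^*\omega'$ is nowhere-vanishing for \emph{every} nonzero $\omega' \in H^0(A, \Omega^1_A)$. This stronger condition is pointwise equivalent to $df^*_x$ being injective for all $x \in X$, i.e.\ to $f$ being smooth, and equivalent as well to the $\dim A$-dimensional subspace $f^*H^0(A, \Omega^1_A) \subset H^0(X, \Omega^1_X)$ being pointwise linearly independent, so that $X$ satisfies $(\ast)$ with $g = \dim A$. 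Granting this, \cite{PS14} together with the standing hypothesis yields $\kappa(X) = \dim X - \dim A$, and Theorem~\ref{thm:main} supplies the product decomposition $X \cong (A' \times Z)/G$ with $Z$ of general type; simplicity of $A$ forces $A' \to A$ to be an isogeny, and isotriviality of $f$ with general type fibers follows immediately.

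The main obstacle is the promotion step. At a putative singular point $s$ of $f$ the subspace $\ker(df^*_s) \subset H^0(A, \Omega^1_A)$ is positive-dimensional, so a priori a single $\omega$ can avoid $\bigcup_{s \in S} \ker(df^*_s)$ without the singular locus $S$ being empty. The essential leverage is the simplicity of $A$: since $\omega$ is translation-invariant it is closed, so $f^*\omega$ defines an integrable codimension-one foliation on $X$ pulled back from $\ker(\omega) \subset TA$, whose leaves are dense in $A$ precisely because $A$ contains no proper positive-dimensional abelian subvariety. This transcendental density, combined with the minimality of $X$, the Kodaira dimension lower bound, and the structural analysis developed for Theorem~\ref{thm:main}, should rule out $S \ne \emptyset$.
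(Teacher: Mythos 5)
Your direction $(2) \Rightarrow (1)$ is fine, and your observation that $(2)$ is equivalent to the pullback of a basis of $H^0(A, \Omega^1_A)$ being PLI is correct --- but note that this means the ``promotion step'' is not a reduction at all: it \emph{is} statement $(2)$, so Theorem~\ref{thm:main} does no work toward $(1) \Rightarrow (2)$ and the entire burden falls on that step. And there the proposal has a genuine gap. The closing paragraph offers only a heuristic: density of the leaves of the foliation $\ker(f^*\omega)$, ``combined with'' minimality, the Kodaira bound, and unspecified structural analysis, ``should rule out'' a nonempty singular locus. As you yourself note, a single $\omega$ can avoid $\bigcup_{s \in S}\ker(df_s^*)$ while $S \neq \emptyset$; density of one leaf is a transcendental statement about one form and gives no control over the algebraic locus where some \emph{other} invariant form pulls back to zero. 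No mechanism is supplied that converts one nonvanishing pullback into $\dim A$ pointwise independent ones, and I do not see how the foliation picture produces one.

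The paper supplies exactly such a mechanism, in a different order. First it uses the nowhere vanishing form to unlock the structure theory rather than the other way around: by \cite[Thm.~2.1]{PS14} applied to the Iitaka fibration, a nowhere vanishing $f^*\omega$ cannot be pulled back from the quotient $Q_X$ of $\Alb_X$ by the image of $\Alb_F$ ($F$ the general Iitaka fiber), so $\Alb_F \to A$ is nonzero, hence surjective by simplicity of $A$; combined with $\kappa(X) \ge \dim X - \dim A$ this forces $\dim F = \dim A$ and $F \to A$ generically finite, which is what Theorem~\ref{thm:factoringthroughAV} needs to produce an \etale cover $\tau : X' \to X$ with $X' \cong Z \times B$, $Z$ of general type and $B$ abelian. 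Only then is the single form promoted: writing $\tau^* f^* \omega = \pi_1^* \omega_Z + \pi_2^* \omega_B$, if $\omega_B = 0$ then $\omega_Z$ would be a nowhere vanishing one-form on the general type variety $Z$, contradicting \cite[Thm.~1]{PS14}; hence $\omega_B \neq 0$, the map $B \to A$ is nontrivial and therefore an isogeny by simplicity and the dimension count, and pulling back a full basis of $H^0(A, \Omega^1_A)$ through $Z \times B \to X \to A$ gives PLI forms, i.e.\ smoothness of $f$. The Popa--Schnell vanishing theorem applied to the general type factor of the cover is the missing idea in your outline; without it, or a substitute of comparable strength, the argument does not close.
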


From condition (2), both the equality and the fact that $f$ is isotrivial follow from known work. Indeed, if we assume that $f \colon X \rightarrow A$ is smooth, a recent result of Meng-Popa \cite{MP21} shows that $\kappa(X) = \kappa(F)$. Together with the Kodaira dimension assumption in Theorem~\ref{thm:smooth_map_to_simpleAV} and the trivial inequality $\dim{F} \geq \kappa(F)$, this implies that $\kappa(X) = \dim{X} - \dim{A} = \dim{F}$ and so $F$ is of general type. Since the $C_{n.m}^{+}$ conjecture is known when the fibers are of general type by \cite{Kollar87}, it follows that $f$ is birationally isotrivial (hence isotrivial by the minimality assumption). For this last part, see also \cite[Cor. 3.2]{PS14}.

For a variety $X$ which is not necessarily minimal, we give a much stronger generalization of \cite[Thm.~1.3]{Hao23}, both in terms of the number of PLI one-forms and in terms of the geometric consequences for $X$.

\begin{Lthm}\label{thm:KodairaDimFiniteCoverProduct}
Let $X$ be an $n$-dimensional smooth projective variety with $\kappa(X) = n - g$, and let $f : X \to A$ be a morphism to an abelian variety $A$. Suppose there exist one-forms $\omega_1, \dots, \omega_g \in H^0(A, \Omega^1_A)$ such that $f^* \omega_1, \dots, f^* \omega_g$ are PLI on $X$. Then there exists a $g$-dimensional quotient $q : A \to B$ and a birational map $X \birat (B' \times Z)/G$ making the diagram
\begin{center}
    \begin{tikzcd}
    X \arrow[d, dashed] \arrow[r, "f"] & A \arrow[d, "q"] 
    \\
    (B' \times Z)/G \arrow[r] & B 
    \end{tikzcd}
\end{center}
commute, where $B' \to B$ is an isogeny with kernel $G$ and $Z$ is a smooth variety of general type with a (not necessarily free) $G$-action.
\end{Lthm}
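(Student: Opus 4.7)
The plan is to use the PLI assumption to extract a smooth surjective morphism $h : X \to B$ onto a $g$-dimensional abelian variety $B$, identify the fibers as smooth varieties of general type via Meng-Popa, and then invoke birational isotriviality for such fibrations to obtain the desired decomposition after an isogeny.

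Let $W \subset H^0(A, \Omega^1_A)$ be the $g$-dimensional span of $\omega_1, \ldots, \omega_g$, and let $q : A \to B$ be the quotient of abelian varieties with $q^* : H^0(B, \Omega^1_B) \xrightarrow{\sim} W$; in particular $\dim B = g$. Setting $h := q \circ f$, the forms $f^*\omega_i$ coincide with $h^*$ applied to a basis of $H^0(B, \Omega^1_B)$. At each $x \in X$, the PLI hypothesis is equivalent to injectivity of $(dh)^*_x : T^*_{h(x)} B \to T^*_x X$, hence to surjectivity of $(dh)_x$, so $h$ is a smooth morphism; properness of $X$ and connectedness of $B$ then force surjectivity.

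Applying \cite{MP21} to the smooth morphism $h$ yields $\kappa(F) = \kappa(X) = n - g = \dim F$ for a general fiber $F$, so $F$ is smooth of general type. Koll\'ar's theorem on $C_{n,m}^+$ for fibrations with general type fibers \cite{Kollar87} (see also \cite[Cor.~3.2]{PS14}) then shows that $h$ is birationally isotrivial. I would promote this to honest isotriviality by passing to the relative canonical model $X \birat X^c \to B$, which exists since the fibers of $h$ are of general type; its fibers are the canonical models $F^c$ of the fibers of $h$, and birational isotriviality forces these canonical models to be mutually isomorphic, so $X^c \to B$ is genuinely isotrivial.

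Since $\operatorname{Aut}(F^c)$ is finite (Matsumura), the monodromy of this isotrivial family factors through a finite quotient $G$ of $\pi_1(B)$, and the corresponding connected \'etale cover $B' \to B$ is, being a connected finite \'etale cover of an abelian variety, an isogeny of abelian varieties with kernel $G$. The pullback family then trivializes as $X^c \times_B B' \cong F^c \times B'$, with diagonal $G$-action given by free translation on $B'$ and by monodromy on $F^c$. Taking a $G$-equivariant resolution $Z \to F^c$ produces a smooth variety of general type with a (not necessarily free) $G$-action, and the composition $(B' \times Z)/G \birat (B' \times F^c)/G \cong X^c \birat X$ yields the desired birational map, which commutes with $q \circ f = h$ by construction. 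The main obstacle is the third step: identifying the relative canonical model, upgrading birational to honest isotriviality on it, and then propagating the $G$-action through a $G$-equivariant resolution while preserving compatibility with the original map to $A$.
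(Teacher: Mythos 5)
There is a genuine gap at the very first step, and it undermines the whole strategy. You assume that the $g$-dimensional span $W = \langle \omega_1, \dots, \omega_g \rangle \subset H^0(A, \Omega^1_A)$ equals $q^* H^0(B, \Omega^1_B)$ for some $g$-dimensional quotient $q : A \to B$. Subspaces of that form are exactly the annihilators of the tangent spaces of codimension-$g$ abelian subvarieties of $A$, and a general $g$-dimensional subspace --- even one whose pullback to $X$ is PLI --- is not of this type. The paper's own first example is a counterexample to your step: take $X = E \times C$ with $E$ elliptic and $C$ of genus $\ge 2$ whose Jacobian has only simple isogeny factors of dimension $\ge 2$, let $f : X \to A = E \times J(C)$ be the Albanese map, and set $\omega = \omega_E + \omega_C$. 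Then $f^* \omega$ is nowhere vanishing (hence PLI with $g=1$) and $\kappa(X) = 1 = \dim X - 1$, but since $\Hom(J(C), E') = 0$ for every elliptic curve $E'$, every $1$-dimensional quotient of $A$ factors through the projection to $E$, so the only line of the form $q^* H^0(B, \Omega^1_B)$ is $\C\,\omega_E \neq \C\,\omega$. Hence the morphism $h = q \circ f$ carrying a basis of $H^0(B,\Omega^1_B)$ to the $f^*\omega_i$ does not exist, and with it goes the pointwise-surjectivity argument for smoothness of $h$ that drives everything downstream. (The closed forms $f^*\omega_i$ do define an integrable corank-$g$ distribution by Frobenius, but its leaves are typically non-algebraic, as in this example, so there is no fibration to fall back on.)

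The quotient $B$ has to be produced ``transversally,'' which is what the paper does: under $\kappa(X) = n - g$, Theorem~\ref{non_contraction} shows the general fiber $F$ of the \emph{Iitaka fibration} is $g$-dimensional with $\kappa(F) = 0$ and maps generically finitely onto a translate of a fixed abelian subvariety $B_0 \subset A$; one then sets $B \colonequals B_0^\vee$ via a polarization, and the $\omega_i$ need not be pulled back from $B$ at all. The factor $Z$ arises as a $G$-equivariant compactification of a $G$-cover of an open subset of the base of the Iitaka fibration (via the torsor rigidity results of Section 3 and Theorem~\ref{thm:GeneralBirDecomposition}), not as a fiber of a map to $B$. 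Your later steps (Meng--Popa, Koll\'ar's $C^+_{n,m}$, relative canonical models, finite monodromy) are essentially the route the paper sketches in the discussion after Theorem~\ref{thm:smooth_map_to_simpleAV}, but those tools only become available once a smooth morphism to a $g$-dimensional abelian variety is already in hand --- which is precisely what your first step fails to produce, and which for non-minimal $X$ is not even asserted by the theorem.
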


Note that there are significant differences between this paper and the earlier works, e.g., \cite{HS21(1)} and \cite{Hao23}. For instance, we are able to avoid a careful analysis of degenerate fibers appearing in the Iitaka fibration by applying rigidity results on torsors over abelian schemes (which are recorded in \S3) and leveraging the $G$-equivariant MMP. Using these methods, we are able to bootstrap from birational results obtained entirely over the smooth locus of the Iitaka fibration to the existence of prescribed $G$-covers over the original variety. This also has the advantage of strengthening our conclusions. Instead of arbitrary quasi-\etale covers, we obtain \etale $G$-covers for $G$ a quotient of $\Z^{2g}$. From a different perspective, we would like to also mention that there are results about classifying varieties that carry nowhere vanishing sections of the tangent bundle \cite{AMN12}.

\textbf{Conventions.} A \textit{variety} is an integral separated scheme of finite type over an algebraically closed field $k$ of characteristic zero. A \textit{minimal model} is a projective variety $X$ with terminal $\Q$-factorial singularities such that $K_{X}$ is nef. We say a minimal model is \textit{good} if $n K_X$ is base-point-free for some $n > 0$.

\textbf{Acknowledgements.} We would like to thank Sean Cotner, Eleny Ionel, Rafe Mazzeo, Johan de Jong, James M\textsuperscript{c}Kernan,  Mihnea Popa, Christian Schnell, Stefan Schreieder, Ravi Vakil, and Chenyang Xu for the many insightful discussions and suggestions.

\section{PLI holomorphic one-forms}

Let $X$ be a smooth complex projective variety with Kodaira dimension $\kappa(X) \geq 0$, and suppose that $X$ satisfies condition $(\ast)$. This is equivalent to the existence of a vector sub-bundle $\O_X^{\oplus g} \subseteq \Omega_X^{1}$, or alternatively a subspace
\[ W \subset H^0(X, \Omega_X^1) \]
whose nonzero elements are all nowhere vanishing holomorphic one-forms. We will pass between these characterizations without comment. An alternative form of our main question asks if such subbundles $\struct{X}^{\oplus g} \subset \Omega_X$ are \textit{explained} by a smooth morphism $X \to A$ to an abelian variety. From the existence of a smooth morphism to an abelian variety of dimension $g$, one can recover some set of PLI one-forms but these may not be the ones we started with in ($\ast$). We use the terminology \textit{explained by} rather than saying the subbundle arises from the morphism $f : X \to A$ since the stronger property almost never holds, as the following example demonstrates.

\begin{example}
Let $C$ be a genus $g \ge 2$ whose Jacobian has only simple isogeny factors of dimension $\ge 2$ and let $E$ be an elliptic curve. Consider the product $X = E \times C$ with its projection maps $\pi_{i}$. Choose one-forms $\omega_{E} \in H^{0}(E, \Omega_{E}^{1})$ and $\omega_{C} \in H^{0}(C, \Omega_{C}^{1})$. First, the product $X$ carries two nowhere-vanishing one-forms which are \textit{globally independent} but not PLI: $\pi_{1}^{\ast}\omega_{E}$ and $\pi_{1}^{\ast}\omega_{E} + \pi_{2}^{\ast}\omega_{C}$ so it is important to consider only PLI forms. Furthermore, the generic non-vanishing one-form on $X$ takes the form $\pi_{1}^{\ast}\omega_{E} + \pi_{2}^{\ast}\omega_{C}$ with $\omega_C \neq 0$. However, any morphism $X \to E'$ to an elliptic curve factors through $\pi_1$ since any morphism $C \to E'$ is constant by assumption. Therefore, the only one-form which is the pullback along a smooth map to an elliptic curve is $\pi_1^{\ast} \omega_{E}$. In a looser sense, we will say that the entire Zariski open of $H^0(X, \Omega_X)$ consisting of nowhere vanishing forms is \textit{explained} by the smooth map $\pi_1 : X \to E$. We aim to explain PLI forms on higher-dimensional varieties in a similar sense. 
\end{example}

Our goal now is to produce a candidate map from $X$ to an abelian variety of dimension $g$. Let $\phi : X \rat X^{\can}$ be the Iitaka fibration of $X$, consider a desingularization $S$ of $X^{\can}$, and let $X' \to X$ be a smooth resolution of $X \rat S$. The Albanese morphism $X \to \Alb_{X}$ induces a rational map $X' \to \Alb_X$, which is everywhere defined since $\Alb_X$ is an abelian variety. These fit into the diagram below:
\begin{center}
\begin{tikzcd}[row sep=small]
& F \arrow[rr] \arrow[d] & & \Alb_{F} \arrow[d]
\\
X \arrow[dd, dashed] & X' \arrow[l] \arrow[rr] \arrow[dd] & & \Alb_{X} \arrow[dd, two heads] \arrow[dl, two heads]
\\
&  & Q_X \arrow[rd, two heads] &
\\
X^{\can} & S \arrow[l] \arrow[ru, dashed] \arrow[rr] & & \Alb_S
\end{tikzcd}
\end{center}

\noindent The two left-facing morphisms are birational, so we will freely use $\Alb_{X} \cong \Alb_{X'}$. In the diagram, $F$ denotes a general fiber of $X' \rightarrow S$ and $Q_X$ is the cokernel of $\Alb_F \to \Alb_X$. This gives the third column of induced morphisms on Albanese varieties. Since the fiber $F$ is contracted in $X' \to Q_X$ by definition, it factors birationally through $X' \to S$ by rigidity. 

\begin{lemma}[{c.f. \cite[Proof of Conjecture 1.2]{PS14}}]
Following the notation above, the inequalities hold:
\begin{equation}\label{eq:1}
    \dim{F} \ge \dim{\Alb_X} - \dim{Q_X} \ge \dim{W}.
\end{equation}
\end{lemma}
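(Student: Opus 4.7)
The plan is to establish the two inequalities separately. The first, $\dim F \ge \dim \Alb_X - \dim Q_X$, is immediate from the right-exact sequence $\Alb_F \to \Alb_X \to Q_X \to 0$: the image of $\Alb_F$ in $\Alb_X$ has dimension exactly $\dim \Alb_X - \dim Q_X$, which is bounded by $\dim \Alb_F \le \dim F$ via the universal property of the Albanese.

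For the second inequality, I would dualize to Hodge theory. Via the Albanese identification $H^0(X, \Omega^1_X) \cong H^0(\Alb_X, \Omega^1)$, the subspace $H^0(Q_X, \Omega^1)$ appears as the kernel of restriction to a general fiber $F$ (through $H^0(\Alb_F, \Omega^1)$), so the image of this restriction has dimension $\dim \Alb_X - \dim Q_X$. It therefore suffices to show that $W \cap H^0(Q_X, \Omega^1) = 0$, i.e., no nonzero PLI form $\omega \in W$ restricts to zero on $F$. Supposing such an $\omega$ exists, its pullback to $X'$ factors through $Q_X$; since the general fiber of $X' \to S$ is contracted to a point in $Q_X$ (because $\Alb_F \to Q_X$ is zero), rigidity of morphisms from fibrations lets us factor birationally through the Iitaka base, writing $\omega|_{X'} = f^{*}\eta$ for some nonzero $\eta \in H^0(S, \Omega^1_S)$, where $f \colon X' \to S$ (after replacing $S$ by a suitable smooth birational model if necessary). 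Now $S$ is of general type---it desingularizes the Iitaka variety $X^{\can}$---so Popa--Schnell's theorem \cite{PS14} forces $\eta$ to vanish at some $s \in S$. Consequently $\omega|_{X'} = f^{*}\eta$ vanishes identically on $f^{-1}(s)$, and any non-exceptional point therein projects back to a zero of $\omega$ on $X$, contradicting the PLI hypothesis.

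The main obstacle I anticipate is confirming that the Popa--Schnell zero on $S$ descends to a genuine zero of $\omega$ on $X$, rather than being absorbed into the birational modification $X' \to X$. This should be handled by choosing $s$ general in the zero locus of $\eta$: an exceptional divisor that does not dominate $S$ has image avoiding a general such $s$, while one that does dominate $S$ meets the positive-dimensional fiber $f^{-1}(s)$ in codimension one, leaving plenty of non-exceptional points where $\omega$ vanishes. Some bookkeeping is also required to justify descending a form from $Q_X$ along $X' \dashrightarrow Q_X$ to a form on the smooth model $S$, but since forms extend uniquely across birational modifications of smooth varieties this is routine.
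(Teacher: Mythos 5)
There are two genuine gaps here, one in each inequality. For the first, you assert that $\dim \Alb_F \le \dim F$ follows ``via the universal property of the Albanese.'' It does not: for a curve of genus $g$ one has $\dim \Alb_F = g$, which exceeds $\dim F = 1$. The inequality holds in this setting only because $F$ is a general fiber of the Iitaka fibration, so $\kappa(F) = 0$, and then \cite[Thm.~1]{Kawamata81} gives surjectivity of $F \to \Alb_F$. This is exactly the input the paper cites and your argument omits.

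The second gap is the serious one. Your plan for proving $W \cap H^0(Q_X, \Omega^1) = 0$ is to push the form down to the Iitaka base $S$ and invoke the Popa--Schnell vanishing theorem there, on the grounds that ``$S$ is of general type --- it desingularizes the Iitaka variety $X^{\can}$.'' The base of the Iitaka fibration is \emph{not} of general type in general: a properly elliptic surface fibered over $\P^1$ has Iitaka base $\P^1$. (What is true is that $K_S$ plus the boundary and moduli parts in the canonical bundle formula is big, not that $S$ itself is of general type.) So the basic theorem ``one-forms on general type varieties vanish somewhere'' cannot be applied to $S$, and the reduction collapses. The correct route --- the one taken in the paper, following the proof of Conjecture 1.2 in \cite{PS14} --- is to apply the \emph{refined} Theorem~2.1 of \cite{PS14} directly to the composite $f : X \to \Alb_X \to Q_X$: since $f$ factors birationally through the Iitaka fibration, one has $H^0(X, \omega_X^{\otimes d} \otimes f^* L^{-1}) \neq 0$ for some $d \ge 1$ and some ample $L$ on $Q_X$, and Theorem~2.1 then forces every nonzero form in $f^* H^0(Q_X, \Omega^1_{Q_X})$ to have a nonempty zero locus on $X$ itself, whence $W \cap f^* H^0(Q_X, \Omega^1_{Q_X}) = \{0\}$ and the dimension count $\dim W + \dim Q_X \le \dim \Alb_X$ follows from injectivity of $f^*$. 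Your surrounding bookkeeping (descending the zero from $S$ back to $X$, choosing $s$ general in the zero locus) is reasonable in itself, but it is moot without a valid source for the zero of $\eta$.
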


\begin{proof}
The first inequality follows from the fact that $\dim{F} \ge \dim{\Alb_F}$ \cite[Thm.~1]{Kawamata81} and $\dim{Q_X} \ge \dim{\Alb_X} - \dim{\Alb_F}$ (by definition of $Q_X$). For the second inequality, note that the composition morphism $f \colon X \to \Alb_{X} \to Q_X$ birationally factors through the Iitaka fibration on $X$. Therefore, we may apply \cite[Thm.~2.1]{PS14} to show that
\[ W \cap f^{*} H^{0}(Q_X, \Omega_{Q_X}^1) = \{ 0 \}, \]
which implies that $\dim W + \dim Q_X \leq \Alb_{X}$. Note that $f^{\ast}$ is injective since $\Alb_X \to Q_X$ is smooth and surjective.
\end{proof}

Given the above inequalities, it is natural to study the extremal case: when the Kodiara dimension is maximized with respect to property $(\ast)$. In this case, we can produce a map from $X$ to an abelian variety.

\begin{theorem}\label{non_contraction}
If $\kappa(X) = \dim{X} - \dim{W}$, then $\dim{F} = \dim{W}$ so the above are all equalities. This has the following consequences:
\begin{enumerate}[label={\upshape(\roman*)}]
\item $F \to \Alb_F$ is a birational morphism,
\item $\Alb_F \to \Alb_X$ is finite \etale onto its image,
\item $F \to \Alb_X$ is a generically finite map onto a translate of an abelian subvariety of $\Alb_{X}$,
\item $X$ admits a surjective morphism to an abelian variety of dimension $g = \dim{W}$.
\end{enumerate}
\end{theorem}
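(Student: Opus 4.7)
My plan is to extract all four conclusions directly from the dimension equalities forced by the Kodaira dimension hypothesis. The starting observation is that the first inequality in \eqref{eq:1} is automatically an equality: since $X' \to S$ is birational to the Iitaka fibration and $\dim S = \kappa(X)$, one has $\dim F = \dim X - \kappa(X) = \dim W$. Hence every intermediate inequality collapses, giving
\[
\dim F = \dim \Alb_F = \dim \Alb_X - \dim Q_X = \dim W.
\]

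For (i), I would use that a general fiber of the Iitaka fibration satisfies $\kappa(F) = 0$, so by Kawamata's theorem \cite{Kawamata81} the Albanese map $F \to \Alb_F$ is surjective with connected fibers; the equality $\dim F = \dim \Alb_F$ then forces it to be birational. For (ii), I would unwind the definition $Q_X = \Alb_X / \mathrm{im}(\Alb_F \to \Alb_X)$: the equality $\dim Q_X = \dim \Alb_X - \dim \Alb_F$ shows that $\Alb_F \to \Alb_X$ has finite kernel, and any finite-kernel homomorphism of abelian varieties in characteristic zero is an isogeny onto its image, hence finite \etale. Statement (iii) is then the immediate composition: $F \to \Alb_F \to \Alb_X$ is a birational map followed by a finite \etale map, landing in a translate of an abelian subvariety.

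The one step requiring a small extra idea is (iv), since the Albanese map itself does not have the right target dimension. Here I would invoke Poincar\'{e} complete reducibility to produce a complementary abelian subvariety $C \subset \Alb_X$ to $A' := \mathrm{im}(\Alb_F \to \Alb_X)$, so that $\Alb_X/C$ is isogenous to $A'$ and of dimension $g = \dim W$. The composition $X \to \Alb_X \to \Alb_X/C$ is then the desired map; surjectivity follows by restricting to a general fiber, where $F \to A' \to \Alb_X/C$ is a generically finite map composed with an isogeny, hence has image of full dimension $g$. I do not expect any step to pose a serious obstacle: the argument is essentially bookkeeping with dimensions of abelian varieties, with Kawamata's theorem and Poincar\'{e} reducibility as the only nontrivial inputs.
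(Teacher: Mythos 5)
Your proposal is correct and follows essentially the same route as the paper: the hypothesis gives $\dim F = \dim X - \kappa(X) = \dim W$, which collapses the chain in \eqref{eq:1}; Kawamata's theorem plus the equality $\dim F = \dim \Alb_F$ gives (i); the dimension count on $Q_X$ gives finiteness of the kernel of $\Alb_F \to \Alb_X$ for (ii); and (iii) is the composition. For (iv) the paper produces the $g$-dimensional quotient by fixing a polarization and dualizing ($A \to A^\vee \twoheadrightarrow B^\vee$) rather than citing Poincar\'e complete reducibility, but this is the same construction, and your surjectivity argument via the restriction to a general fiber matches the paper's.
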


\begin{proof}
Suppose $\kappa(X) = \dim{X} - \dim{W}$, or equivalently $\dim{F} = \dim{W}$. In this case, all of the inequalities in \eqref{eq:1} become equalities, $\kappa(F) = 0$, and $\dim{F} = \dim{\Alb_F}$ so (i) holds by \cite[Thm.~1]{Kawamata81}. (ii) follows from the fact that $\dim{Q_X} + \dim{\Alb_F} = \dim{\Alb_X}$, and (iii) is the composition of (i) and (ii).

By varying the fiber $F$, rigidity implies that the images of $F \to \Alb_X$ are translates of a fixed abelian subvariety, say $B \subset \Alb_X$. Fixing a polarization on $A$ and dualizing yields a morphism
\[ q : A \to A^\vee \onto B^\vee \]
such that $B \subset A$ maps surjectively onto $B^\vee$. We claim that $X \to B^\vee$ is surjective, which follows from the fact that $F \to B$ is surjective.
\end{proof}


In the rest of this section, we present two useful lemmas that follow from existing results in MMP.

\begin{lemma}\label{lemma:existence-gmm}
    Let $X$ be a smooth projective variety satisfying $(\ast)$ with $\kappa(X) = \dim X - g$. Then $X$ admits a good minimal model.
\end{lemma}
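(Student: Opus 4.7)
The plan is to use Theorem~\ref{non_contraction} to produce a fibration of $X$ over an abelian variety with general-type fibers, and then invoke existing MMP results for such fibrations. By Theorem~\ref{non_contraction}(iv), the assumption $\kappa(X) = \dim X - g$ combined with $(\ast)$ yields a surjective morphism $f \colon X \to B^{\vee}$ to a $g$-dimensional abelian variety.

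The first intermediate step is to show that a general fiber $G$ of $f$ is of general type. From the construction in Theorem~\ref{non_contraction}, the general Iitaka fiber $F$ (of dimension $g$) maps to $B^{\vee}$ via a surjective generically finite morphism. Hence $G \cap F$ is a finite set for general $F$, so the restriction $\phi|_G \colon G \dashrightarrow X^{\can}$ of the Iitaka fibration is generically finite and dominant. Since $\dim G = \dim X - g = \kappa(X) = \dim X^{\can}$ and a resolution of $X^{\can}$ is of general type, it follows that $G$ itself is of general type.

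Having shown that $f \colon X \to B^{\vee}$ is a surjective morphism to an abelian variety with general-type fibers, the existence of a good minimal model of $X$ follows from MMP techniques. One concrete approach is to apply Koll\'ar's $C_{n,m}^{+}$ for general-type fibers (as cited after Theorem~\ref{thm:smooth_map_to_simpleAV}) to deduce that $f$ is birationally isotrivial: after a finite \etale Galois cover $B' \to B^{\vee}$ with group $G_0$, there is a $G_0$-equivariant birational equivalence $X \times_{B^{\vee}} B' \dashrightarrow B' \times G$, where $G_0$ acts diagonally (freely on $B'$ as the Galois group and via some action on $G$). By BCHM, the general-type variety $G$ admits a good minimal model $G^{\mathrm{min}}$; using the uniqueness of canonical models for varieties of general type, the $G_0$-action on $G$ extends biregularly to $G^{\mathrm{min}}$. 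Then $(B' \times G^{\mathrm{min}})/G_0$ is a candidate good minimal model of $X$, which one verifies is birational to $X$.

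The main obstacle will be in this last step: either finding a precise reference for the existence of good minimal models of fibrations over abelian varieties with general-type fibers, or else rigorously transferring the $G_0$-action to $G^{\mathrm{min}}$ and verifying that the diagonal quotient retains the required terminal $\Q$-factorial singularities with $K$ nef and semiample. The former should reduce to relative MMP over $B^{\vee}$ combined with base-point-freeness on the general-type fiber, while the latter uses that the $G_0$-action on $B' \times G^{\mathrm{min}}$ is free because it already is on the first factor.
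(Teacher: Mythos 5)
Your overall strategy is genuinely different from the paper's, and its first half is correct: the map $X \dashrightarrow X^{\can}\times B^{\vee}$ is generically finite between varieties of the same dimension (since the general Iitaka fiber $F$ is generically finite onto $B^{\vee}$ by Theorem~\ref{non_contraction}), so the general fiber $G$ of $f\colon X\to B^{\vee}$ dominates $X^{\can}$ generically finitely and is therefore of general type. The paper instead uses the orthogonal fibration: by Theorem~\ref{non_contraction}(i) the general fiber of the \emph{Iitaka} fibration is birational to an abelian variety, and one then quotes \cite[Thm.~4.4]{Lai11}, which says that a variety whose Iitaka fibration has general fiber admitting a good minimal model itself admits a good minimal model. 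That is the paper's entire proof, and it requires no discussion of isotriviality.

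The second half of your argument has a genuine gap, which you partly flag yourself. Koll\'ar's $C^{+}_{n,m}$ does give $\mathrm{Var}(f)=0$, hence birational isotriviality; but for a birationally isotrivial family of general type varieties the trivializing cover of the base is a priori only generically finite and possibly ramified. Your assertion that one may take a finite \etale Galois cover $B'\to B^{\vee}$ is precisely the delicate global statement that the paper's structure theorems (Proposition~\ref{prop:AntiDiagonalQuotient}, Theorem~\ref{thm:GeneralBirDecomposition}) are designed to establish, and there it is extracted from the torsor rigidity of the abelian-fibered Iitaka fibration \emph{after} a good minimal model is already in hand --- so within the paper's logic your route is circular, and outside it the monodromy representation $\pi_1(V)\to \mathrm{Aut}(G^{\can})$ over the locus $V\subset B^{\vee}$ where the relative canonical model is a nice family has no reason to factor through $\pi_1(B^{\vee})$. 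The approach is nevertheless salvageable, because the reference you ask for exists: Birkar and Chen, \emph{Varieties fibred over abelian varieties with fibres of log general type}, prove that a klt variety admitting a surjection onto a normal projective variety of maximal Albanese dimension with canonical class big over the base has a good minimal model. This applies verbatim to $f\colon X\to B^{\vee}$ once your first step is done and bypasses the descent problem entirely; but as written, the \etale trivialization step does not go through.
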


\begin{proof}
    Lemma~\ref{non_contraction} implies that in this case, the Iitaka fibration is birationally fibered in abelian varieties, so we may apply \cite[Thm.~4.4]{Lai11}.
\end{proof}

\begin{lemma} \label{lemma:minimal_birat_to_abelian_var}
Let $X$ be a variety with canonical singularities and $K_X$ numerically trivial. If $f : X \birat A$ is a birational map to an abelian variety then $f$ extends to an isomorphism.
\end{lemma}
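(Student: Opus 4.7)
The plan is to first upgrade $f$ to a genuine morphism $\bar f : X \to A$, and then apply a negativity-lemma argument on a common resolution to show that $\bar f$ contracts no divisor, so that Zariski's Main Theorem furnishes the desired isomorphism.

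For the first step I would use that $X$ has rational singularities (canonical implies rational in characteristic zero, by Elkik) together with birational invariance of the Albanese variety. Fix a resolution $\pi : \tilde X \to X$; the rational map $\tilde X \rat A$ is everywhere defined since $\tilde X$ is smooth and $A$ contains no rational curves. Birational invariance identifies $\Alb(\tilde X) \cong A$ in a way compatible with $f$, so this morphism agrees up to translation with the Albanese morphism of $\tilde X$, which in turn descends to $X$ because $X$ has rational singularities. This produces the desired morphism $\bar f : X \to A$ extending $f$.

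For the second step, take a common resolution $p : W \to X$, set $q = \bar f \circ p : W \to A$, and write
\[
K_W = p^* K_X + E_p, \qquad K_W = q^* K_A + E_q,
\]
with $E_p \geq 0$ effective and $p$-exceptional (from the canonical singularities of $X$), and $E_q \geq 0$ effective and $q$-exceptional with every $q$-exceptional prime divisor appearing in $E_q$ with coefficient at least $1$ (since $A$ is smooth, so the discrepancy over a center of codimension $\geq 2$ is $\geq 1$). Because $\bar f$ is a morphism, any $p$-exceptional divisor on $W$ has image in $A$ of codimension $\geq 2$ and is therefore $q$-exceptional; in particular $E_p$ is $q$-exceptional. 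The difference $E_q - E_p$ equals $p^*K_X - q^* K_A$, which is numerically trivial (using $K_X \equiv 0$ and $K_A \sim 0$). Thus $E_q - E_p$ is a $q$-exceptional, $q$-numerically trivial $\Q$-divisor, so the negativity lemma forces $E_q = E_p$.

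If $\bar f$ contracted some prime divisor $D \subset X$, then the strict transform $\tilde D$ on $W$ would be $q$-exceptional and appear in $E_q$ with coefficient $\geq 1$, while $\tilde D$ is not $p$-exceptional and does not appear in $E_p$ at all — contradicting $E_q = E_p$. Hence $\bar f$ has no exceptional divisor; since $A$ is smooth, purity of the exceptional locus upgrades this to $\bar f$ being finite, and Zariski's Main Theorem then yields the isomorphism. The main subtlety is the first step, because without knowing that $f$ extends to a morphism one cannot cleanly conclude that $p$-exceptional divisors are $q$-exceptional, and the negativity argument breaks down; one could alternatively argue on a resolution of the graph of $f$, at the cost of a more delicate bookkeeping of divisors that are exceptional for one projection but dominant under the other.
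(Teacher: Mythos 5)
Your proof is correct, but it takes a genuinely different route from the paper. The paper passes to a crepant terminalization $\tau : X' \to X$ (via \cite{BCHM10}), observes that $K_{X'} = \tau^*K_X$ is nef so $X'$ is a minimal model birational to the minimal model $A$, invokes Kawamata's theorem that birational minimal models are connected by flops, and kills the flops using the absence of rational curves on an abelian variety; it then descends along $X' \to X \to A$ exactly as you do, using rational singularities to extend $f$ to a morphism. Your argument shares that first descent step but replaces the terminalization-plus-flops machinery with a direct discrepancy comparison: writing $K_W = p^*K_X + E_p = q^*K_A + E_q$ on a common resolution, noting $E_q - E_p \equiv 0$ is $q$-exceptional (which uses that $\bar f$ is already a morphism, as you correctly flag), and applying the negativity lemma in both directions to force $E_q = E_p$; a contracted divisor would contribute coefficient $\geq 1$ to $E_q$ but $0$ to $E_p$, and purity over the smooth $A$ plus Zariski's Main Theorem finishes. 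Your approach is more elementary and self-contained — it avoids \cite{BCHM10} and the flop-decomposition theorem entirely, exploiting smoothness of $A$ (exceptional discrepancies $\geq 1$) where the paper exploits the absence of rational curves on $A$ — and it adapts verbatim to any smooth target with trivial canonical class. The paper's route, by contrast, leans on standard MMP black boxes and is shorter to state given that those results are already cited elsewhere in the article. All the facts you use (canonical $\Rightarrow$ rational singularities, descent of maps to abelian varieties along resolutions of varieties with rational singularities, the negativity lemma, van der Waerden purity) are standard, so I see no gap.
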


\begin{proof}
Since $X$ is normal and canonical, by \cite{BCHM10} there exists a crepant terminalization morphism $\tau : X' \to X$ and thus $K_{X'} = \tau^* K_{X}$. But $K_{X}$ is numerically trivial and in particular nef, so $X'$ is a minimal model birational to the abelian variety $A$. The rational map $X' \rat A$ is then a sequence of flops and hence an isomorphism since any flopping locus on $A$ must be trivial. We claim that the maps $X' \to X \rat A$ are isomorphisms. Indeed, $X$ has rational singularities so $X \to A$ is a morphism. Since both maps are finite with connected fibers, they must both be isomorphisms.
\end{proof}

\section{Torsors for Abelian Schemes}

In this section, we present some results about the global structure of torsors over constant abelian schemes and isogenies between isotrivial abelian schemes. We expect this material to be well known to experts but do not know of a reference that covers the exact results we require. In this section only, we work over an arbitrary field $k$ for additional clarity. Given an abelian variety $A$ over $k$ and a $k$-scheme $S$, we will write $A_{S} \colonequals A \times_{k} S$ for the abelian $S$-scheme.

\begin{lemma}\label{lemma:TorsorOverPic^0}
Let $\psi : X \to S$ be a smooth flat morphism whose geometric fibers are abelian varieties. Then $X \to S$ is a torsor over the abelian scheme $\cA \colonequals (\fPic_{X/S}^0)^\vee$.
\end{lemma}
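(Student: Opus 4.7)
The plan is to first verify that $\cA$ is a well-defined abelian $S$-scheme, then reduce to the case where $\psi$ admits a section (where the conclusion becomes the relative biduality theorem for abelian schemes), and finally descend the resulting torsor structure via étale descent.

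For the first step, I would check that $\fPic^0_{X/S}$ is represented by an abelian $S$-scheme. Smoothness and flatness of $\psi$, together with the properness of its geometric fibers (abelian varieties are proper), give properness of $\psi$ by the fibrewise criterion for properness. Since the geometric fibers are abelian varieties, they are geometrically integral with constant arithmetic genus, so by the general theory of the relative Picard scheme (Grothendieck--Artin--Raynaud), $\fPic^0_{X/S}$ is representable by a smooth proper $S$-group scheme with geometrically connected fibers, i.e., an abelian $S$-scheme. Its dual $\cA$ is therefore also an abelian $S$-scheme, and its formation commutes with arbitrary base change.

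Next, because $\psi$ is smooth and surjective, it admits sections étale-locally on $S$: there exists a surjective étale morphism $\pi : S' \to S$ together with a section $\sigma : S' \to X' \colonequals X \times_S S'$. Taking $\sigma$ as origin, $X'$ acquires the structure of an abelian $S'$-scheme, and the relative biduality theorem for abelian schemes produces a canonical isomorphism $X' \cong (\fPic^0_{X'/S'})^\vee = \cA_{S'}$ sending $\sigma$ to the zero section. In particular, $X' \to S'$ is a trivial $\cA_{S'}$-torsor under the translation action.

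The main obstacle is descending this torsor structure from $S'$ to $S$. Set $S'' \colonequals S' \times_S S'$; the two pullbacks of $\sigma$ yield two a priori different trivializations of $X_{S''}$ by $\cA_{S''}$, and by construction they differ by translation by some section $a \in \cA(S'')$. The key observation is that since $\cA$ is commutative, translation by $a$ is equivariant for the translation action, so the action of $\cA_{S'}$ on $X'$ is independent of the choice of section. Hence the action descends to a well-defined morphism $\cA \times_S X \to X$, with the cocycle condition on the triple fibre product forced by the abelian group law. Since being a torsor is étale-local on the base, this exhibits $X \to S$ as an $\cA$-torsor.
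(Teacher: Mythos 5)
Your proposal is correct and follows essentially the same route as the paper: trivialize étale-locally using a section, observe that the induced translation action of $\cA$ is independent of the choice of section because two trivializations differ by a translation (commutativity), descend the action, and check the torsor property étale-locally. The only difference is that you spell out the representability of $\fPic^0_{X/S}$ and the cocycle bookkeeping in more detail, which the paper leaves implicit.
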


\begin{proof}
Since $\psi$ is smooth, after an \etale cover $S' \to S$, it admits a section and hence becomes an abelian scheme isomorphic to $\cA$ over $S'$. This isomorphism depends on the choice of section and hence is noncanonical. However, the induced translation action $\cA_{S'} \acts X_{S'}$ is \textit{canonical} since it does not depend on the choice of section. Indeed, this is a restatement of the observation that the translation action of $A^{\vee \vee} \acts A$ does not depend on the choice of basepoint of $A$ (where the action is defined with respect to the canonical basepoint of $A^{\vee \vee}$). The uniqueness of this action (e.g. the two induced actions over $S' \times_S S'$ agree) implies that it descends to an action over $S$. Checking that it is a torsor can also be done \etale-locally whence it is obvious because then $X \to S$ is isomorphic to $\cA$ \etale-locally.
\end{proof}

\begin{lemma} \label{lemma:isotrivial_constant}
Let $k$ be a field and $S$ a $k$-scheme that has a $k$-point. Let $f : \cA \to A_S$ be a finite homomorphism of abelian schemes, where $A$ is an abelian variety over $k$. If $\deg{f}$ is invertible in $k$, then $\cA \iso A'_S$ over $S$ where $A' \to A$ is a $k$-isogeny of abelian varieties. 
\end{lemma}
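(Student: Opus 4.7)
The plan is to convert $f$ into a quotient presentation $\cA \cong A_S/K'$ where $K' \subseteq A_S[n]$ is a finite étale subgroup scheme, and then exploit the constant structure of $A[n]$ to descend $K'$ to a $k$-subgroup of $A$; this will identify $\cA$ as the base change of a $k$-isogeny $A' \to A$.

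First, $f$ is automatically an isogeny: it is finite between smooth proper $S$-schemes of equal relative dimension, hence surjective, with kernel $K := \ker f$ a finite flat $S$-group scheme of order $n := \deg f$. Since $K$ is annihilated by its order, $K \subseteq \cA[n]$, and the universal property of the quotient $\cA \to \cA/K = A_S$ factors $[n]_\cA$ through $f$, producing an isogeny $g : A_S \to \cA$ with $g \circ f = [n]_\cA$. Cancelling $f$ from $f \circ g \circ f = [n]_{A_S} \circ f$ using that $f$ is faithfully flat (and hence an epimorphism) yields $f \circ g = [n]_{A_S}$, so $K' := \ker g \subseteq A_S[n] = A[n] \times_k S$ and $\cA \cong A_S/K'$. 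Moreover, since $n$ is invertible in $k$, the map $[n]_A$ is étale, so $A[n]$ is a finite étale $k$-group scheme, $A_S[n]$ is finite étale over $S$, and the closed finite flat subscheme $K' \subseteq A_S[n]$ is therefore finite étale over $S$ as well.

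The crucial step is the descent claim $K' = K_0 \times_k S$ for a $k$-subgroup $K_0 \subseteq A[n]$. Assuming $S$ is connected (I would handle disconnected $S$ component by component), finite étale $S$-covers correspond to finite $\pi_1(S, \bar{s}_0)$-sets. Because $A_S[n]$ is base-changed from $k$ and the $k$-point $s_0$ splits the induced map $\pi_1(S, \bar{s}_0) \twoheadrightarrow \Gal(\bar{k}/k)$, the $\pi_1(S, \bar{s}_0)$-action on $A(\bar{k})[n]$ factors through $\Gal(\bar{k}/k)$. Hence any $\pi_1(S)$-stable subset of $A(\bar{k})[n]$ is automatically $\Gal(\bar{k}/k)$-stable and descends to a $k$-subgroup $K_0 \subseteq A[n]$, which by comparison of fibers at $s_0$ coincides with $K'|_{s_0}$; the equivalence of categories then forces $K' = K_0 \times_k S$. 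Setting $A' := A/K_0$ yields $\cA \cong A_S/(K_0 \times_k S) = (A/K_0) \times_k S = A'_S$, with $f$ identifying with the base change of the $k$-isogeny $A' \to A$ whose composition with $A \to A' = A/K_0$ equals $[n]_A$.

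The main obstacle is the descent step: one must verify that the $\pi_1(S)$-action on the geometric fiber of $A_S[n]$ is pulled back from Galois, which rests on $A_S[n]$ being base-changed from $k$ together with the $k$-point of $S$ providing a splitting of fundamental groups. Everything else (the existence of the quasi-inverse $g$, the étaleness of $K'$, and the final quotient identification) is standard once $n$ is invertible in $k$.
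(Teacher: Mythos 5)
Your argument is correct and is essentially the paper's: both proofs reduce to presenting $\cA$ as a quotient of the constant abelian scheme by a finite \'etale subgroup scheme sitting inside a constant finite \'etale group scheme, and then use the $k$-point to show that this subgroup scheme is itself constant. The paper packages this via the dual isogeny $A_S^\vee \to \cA^\vee$ together with the explicit rigidity statement of Lemma~\ref{lemma:rigidity_finite_etale}, whereas you use the complementary isogeny $g$ with $f \circ g = [n]$ and the Galois-category formalism, the $k$-point entering through the surjectivity of $\pi_1(S,\bar{s}_0) \to \Gal(\bar{k}/k)$; this is the same rigidity phenomenon in different clothing, and your route has the mild advantage of avoiding duals and producing $A' = A/K_0$ directly. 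One small caveat: your parenthetical plan to handle disconnected $S$ ``component by component'' does not work, since components other than the one containing $s_0$ need not carry $k$-points (precisely the situation of the paper's first counterexample), so the statement should be, and in its applications harmlessly is, read with $S$ connected.
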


\begin{example}
This is false without the assumption that $S$ has a $k$-point. For instance, if $S = \Spec{k'}$ for a finite extension $k'/k$, then there can exist abelian varieties $\cA$ defined over $k'$ (but not defined over $k$) which map to abelian varieties defined over $k$.  
\end{example}

\begin{example}
This is additionally false if $k$ has characteristic $p$ and $p \mid \deg{f}$, since in this case $G$ may not be \etale over $S$. Indeed, there can be nonisotrivial $\cA$ becuase of the existence of continuous moduli for subgroups of $A[p^n]$.
\end{example}

For the proof, we need the following well-known rigidity lemma for finite \etale subschemes.

\begin{lemma} \label{lemma:rigidity_finite_etale}
Let $X \to S$ be a finite \etale $S$-scheme and $Z, Z' \subset X$ closed subschemes flat and finitely presented over $S$. If $S$ is connected and for some $s \in S$ we have $Z_s = Z'_s$ as subschemes of $X_s$ then $Z = Z'$ as subschemes of $X$.
\end{lemma}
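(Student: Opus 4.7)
The plan is to first upgrade the flat closed immersions $Z \hookrightarrow X$ and $Z' \hookrightarrow X$ to open immersions, reducing the problem to one about clopen subschemes of a finite étale $S$-scheme, and then conclude using the connectedness of $S$.

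The main step is to show that any closed subscheme $W \subset X$ which is flat and finitely presented over $S$ is in fact clopen. Since $X \to S$ is étale we have $\Omega_{X/S} = 0$, and the conormal sequence of the closed immersion $W \hookrightarrow X$ yields a surjection $0 = \Omega_{X/S}|_W \onto \Omega_{W/S}$, so $W \to S$ is unramified. Combined with flatness and finite presentation, this makes $W \to S$ étale. Then $W \to X$ is a morphism between étale $S$-schemes, hence étale; but it is also a closed immersion, in particular a monomorphism, and an étale monomorphism is an open immersion (see e.g.\ EGA IV.17.9.1). Thus $W$ is both open and closed in $X$.

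Applying this to $Z$ and $Z'$, both are clopen subschemes of $X$, and on any clopen subscheme the scheme structure is determined by the underlying set, so the arguments below can be carried out set-theoretically. Form the symmetric difference $\Delta \colonequals (Z \setminus Z') \cup (Z' \setminus Z)$, which is again clopen in $X$ as a Boolean combination of clopens. The projection $\pi : X \to S$ is finite (hence closed) and étale (hence open), so $\pi(\Delta) \subset S$ is clopen. By hypothesis $\Delta \cap X_s = \emptyset$, so $s \notin \pi(\Delta)$. Since $S$ is connected and $\pi(\Delta)$ is a proper clopen subset, it must be empty, and therefore $\Delta = \emptyset$, i.e., $Z = Z'$.

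The main obstacle, though not deep, is the first step: promoting a flat closed immersion into an étale $S$-scheme to an open immersion. Once we know $Z$ and $Z'$ are clopen, the rest is a transparent argument with connected components; without this upgrade, the scheme structures could a priori differ over a common underlying set and the set-theoretic symmetric difference argument would be inconclusive.
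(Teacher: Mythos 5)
Your proof is correct, but it takes a genuinely different route from the paper's. The paper works on the base: it considers the locus of points $s \in S$ where $Z_s = Z'_s$ and shows this locus is clopen, either by reducing to a unique-infinitesimal-lifting statement over Artin rings (topological invariance of the \etale site) or by invoking the finite \etale Hilbert scheme $\Hilb_{X/S}$ and the fact that two sections of a separated \etale scheme agree on a clopen locus. You instead work upstairs in $X$: you observe that a flat, finitely presented closed subscheme $W \subset X$ has $\Omega_{W/S} = 0$ by the conormal sequence, hence is \etale over $S$, so that $W \hookrightarrow X$ is an \etale monomorphism and therefore an open immersion --- making $Z$ and $Z'$ clopen subschemes of $X$ determined by their underlying sets. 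The remainder is then a purely topological symmetric-difference argument using that $\pi$ is both open and closed and that $S$ is connected. Your version is more elementary and self-contained (no Hilbert schemes, no Artinian deformation theory), at the cost of leaning on the cancellation property for \etale morphisms and the characterization of flat, finitely presented monomorphisms as open immersions; the paper's Hilbert-scheme formulation has the mild advantage of packaging the rigidity into a reusable moduli-theoretic statement. Both arguments are complete, and in fact yours only uses equality of the fibers $Z_s$, $Z'_s$ as subsets, which is automatically equivalent to equality as subschemes here since $X_s$ is finite \etale over the residue field.
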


\newcommand{\Hilb}{\mathrm{Hilb}}

\begin{proof}
We will show that the locus of points $s \in S$ where $Z_{s} = Z'_{s}$ is clopen. Being closed is clear so we must show that it is open. By reducing to the Noetherian case, then reducing to complete local rings of $S$, it suffices to show that subschemes $Z \subset X_A$ lift uniquely under extensions of Artin rings $A' \onto A$. The latter statement follows directly from topological invariance of the \etale site. Instead of the series of reductions, one may also argue that the lifting criterion implies that $\Hilb_{X/S}$ is represented by a finite \etale $S$-scheme. Then $Z, Z'$ correspond to a pair of sections $S \rightrightarrows \Hilb_{X/S}$ which by assumption agree at $s \in S$. Since $\Hilb_{X/S} \to S$ is \etale and separated, and the diagonal $\Delta_{\Hilb_{X/S}/S}$ is a clopen embedding, by base change the locus where the maps $S \rightrightarrows \Hilb_{X/S}$ agree is clopen. 
\end{proof}

\begin{proof}[Proof of Lemma~\ref{lemma:isotrivial_constant}]
Consider the dual isogeny $A^\vee_S\to \cA^\vee$. The kernel $G \subset A_S$ is a finite \etale subgroup scheme over $S$. If we let $s \in S(k)$, then the fiber over $s$ is an isogeny $A^\vee \to \cA_s^\vee$ whose kernel is $G_s \subset A^\vee$. Since both $G$ and $G_s \times S$ are closed subgroups of a finite \etale $S$-scheme $(A^\vee_S)[\deg{f^\vee}]$ and they agree at $s$ by definition, from Lemma~\ref{lemma:rigidity_finite_etale} we conclude that $G = G_s \times_k S$ and hence $\cA \cong (A/G_s^\vee)_S$.
\end{proof}

For the remainder of this section, we apply this formalism to show that certain families of abelian varieties arise as the quotient of a constant family by a finite group acting diagonally. The role of diagonal group actions in similar extremal Kodaira dimensional classification results was demonstrated in \cite{HS21(2)}, which utilizes a structure theory for finite quotients of products with elliptic curves. In \cite[\S3]{HS21(2)} the authors use a reparameterization technique to obtain the diagonal action; our approach differs but the diagonal nature of the action is similarly crucial to our applications.

\begin{prop}\label{prop:AntiDiagonalQuotient}
Let $k$ be a field of characteristic zero. Let $\psi : X \to S$ be a smooth flat morphism whose geometric fibers are abelian varieties and $s \in S(k)$. Let $f : X \to A \times_k S$ be a finite surjective $S$-morphism and let $S' \colonequals f^{-1}(0 \times A) \overset{\iota}{\hookrightarrow} X$. Set $A' = X_s$ and $G = \ker{(A' \to A)}$. Then $S' \to S$ is a $G$-torsor and there is an $S$-isomorphism compatible with $f$:
\[ X \iso (A' \times_k S')/G \]
where $G$ acts on $A' \times_k S'$ via the action: $g \cdot (a', s') = (g^{-1} \cdot a', g \cdot s')$.
\end{prop}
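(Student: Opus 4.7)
The plan is to base-change to $S'$, where the situation becomes an isogeny of constant abelian schemes by Lemma~\ref{lemma:isotrivial_constant}, and then descend back to $S$ to obtain the claimed quotient presentation. The preliminary step is to check that $f \colon X \to A_S$ is finite \etale. Flatness follows from miracle flatness since both $X$ and $A_S$ are smooth over $S$ of the same relative dimension $\dim A$. On any geometric fiber, $f_{\bar t} \colon X_{\bar t} \to A_{\bar k(t)}$ is a finite surjective map between two abelian varieties (after any choice of origin on $X_{\bar t}$), hence a translate of an isogeny, and so is \etale in characteristic zero. As a consequence, $S' \to S$ is finite \etale of degree $|G|$.

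Next I would trivialize the pullback to $S'$. The diagonal embedding $S' \hookrightarrow S' \times_S S' \hookrightarrow X_{S'}$ provides a canonical $S'$-section $e$ of $X_{S'} \to S'$, and with $e$ chosen as identity Lemma~\ref{lemma:TorsorOverPic^0} makes $X_{S'}$ into an abelian $S'$-scheme. By construction $f_{S'} \circ e$ is the zero section of $A_{S'}$, so $f_{S'}$ is a homomorphism of abelian $S'$-schemes. Picking a $k$-point $s'_0$ of $S'$ lying over $s$ (which exists after possibly replacing $k$ by a finite extension, or directly in the main application where $k = \mathbb{C}$) and invoking Lemma~\ref{lemma:isotrivial_constant} gives an isomorphism $X_{S'} \cong A'_{S'}$ over $A_{S'}$, where $A' = X_s$ is pointed by $s'_0$ and the right-hand map is the base change of the isogeny $A' \to A$ with kernel $G$. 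Comparing preimages of the zero section of $A_{S'}$ then yields $S' \times_S S' \cong G \times_k S'$ as $S'$-schemes, which is precisely a trivialization of $S' \to S$ as a $G$-torsor.

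Finally, I would identify the transported $G$-action on $A' \times_k S'$ as the antidiagonal one; this is what I expect to be the main obstacle. The quotient presentation $X \cong X_{S'}/G$ is automatic from $S' \to S$ being a $G$-torsor (with $G$ acting on the second factor of $X \times_S S'$). Because this action covers the $G$-action on $S'$, its transport to $A' \times_k S'$ must have the form $g \cdot (a', s') = (\psi_g(s')(a'), g \cdot s')$ for some family of automorphisms $\psi_g(s')$ of $A'$. The key point is that the two trivializations of the fiber $X_s$ as an abelian variety coming from the origins $s'$ and $g \cdot s'$ differ by translation by $-g$ in $A'$; tracing the identity section $e$ through the isomorphism shows $\psi_g(s')$ is exactly this translation. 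Thus $\psi_g(s')(a') = g^{-1} \cdot a'$ in the action notation of the statement, completing the proof.
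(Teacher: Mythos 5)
Your overall strategy is sound and runs parallel to the paper's: both arguments funnel through Lemma~\ref{lemma:isotrivial_constant} and a trivialization of $X\times_S S'\to S'$ via the tautological section, and then descend along the $G$-torsor $S'\to S$. The difference is that you base change to $S'$ immediately to manufacture a section and make $X_{S'}$ an abelian scheme, whereas the paper first equips $X\to S$ with the \emph{canonical} translation action of $\cA=(\fPic^0_{X/S})^\vee$ over $S$ (Lemma~\ref{lemma:TorsorOverPic^0}) and applies Lemma~\ref{lemma:isotrivial_constant} to $\cA\to A_S$ directly, using the given point $s\in S(k)$. Two smaller remarks: you do not need to extend $k$ to find a $k$-point of $S'$ over $s$ --- for $G=\ker(A'\to A)$ to be a group, the statement implicitly takes the origin of $A'=X_s$ to lie over $0_A$, and that origin is already a $k$-point of $S'_s$; extending $k$ is in fact not permissible, since it would change the field of definition of $A'$ and $G$. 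The verification that $f$ is finite \etale (hence that $S'\to S$ is finite \etale) is fine.

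The genuine gap is at the step you yourself flag as the main obstacle. From the trivialization over $S'$ you obtain, for each $g\in G$, a map $\sigma_g:S'\to S'$ over $S$, and you need both that $g\mapsto\sigma_g$ is an \emph{action} (so that $S'\to S$ is a $G$-torsor and $X\cong X_{S'}/G$ makes sense) and that the transported action on $A'\times_k S'$ is the anti-diagonal one. Both assertions reduce to your claim that the two identifications $X_t\cong A'$ determined by the origins $s'$ and $\sigma_g(s')$ differ by translation by $-g$, and this is stated but not proved. A priori the two identifications differ by an automorphism of the variety $A'$ commuting with $q:A'\to A$ and sending $g$ to $0$; ``tracing the identity section'' only records where $0$ goes, it does not show the automorphism is a translation rather than a translation composed with a nontrivial group automorphism. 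What closes this is rigidity: writing such an automorphism as $t_c\circ\beta_0$ with $\beta_0(0)=0$, one checks $\beta_0$ is a homomorphism with $q\circ\beta_0=q$, so $\beta_0-\id$ is a morphism from the connected $A'$ to the finite group $G$, hence zero, and the automorphism is $t_{-g}$. The paper sidesteps the cocycle verification entirely by defining the $G$-action on $X$ (hence on $S'$) over $S$ as the restriction of the canonical $\cA$-action, so that only the identification of the transported action remains, where commutativity of $\cA$ does the work. Your route does go through once the translation claim is established, but as written that claim is the crux of the proposition and is left unjustified.
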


\begin{remark}
Note that finiteness and surjectivity of $f$ can be checked on fibers \cite[Chapter III \S4, Proposition 4.6.7(i)]{EGA3}. Thus, to apply this result it will suffice to show that the induced maps on fibers are dominant generically finite maps of abelian varieties, so automatically isogenies. 
\end{remark}

\begin{proof}[Proof of Proposition~\ref{prop:AntiDiagonalQuotient}]
By Lemma~\ref{lemma:TorsorOverPic^0}, $\psi : X \to S$ is a torsor over the abelian scheme $\cA = (\fPic^0_{X/S})^\vee$. Applying the double dual functor $\fPic^{0}(\fPic^{0}_{-/S})$ to $f$ gives an isogeny $\cA \to A \times_k S$, so by Lemma~\ref{lemma:isotrivial_constant} we have $\cA \cong A'_S$. Then $G \times_k S = \ker{(\cA \to A_S)}$ acts on the fibers of $X \to A_S$ over $S$ since $X \to A_S$ is equivariant with respect to $\cA \to A_S$. In particular, $G \acts S'$ over $S$. With this action, $S' \to S$ is a $G$-torsor. Indeed, being a $G$-torsor can be checked after an \etale cover, so we pass to a cover on which $X$ and $\cA$ become isomorphic. Now, the $\cA_{S'}$-torsor $X \times_S S' \to S'$ has a section $\sigma = (\iota, \id)$ and hence is trivial. Explicitly, there is an $\cA$-equivariant $S'$-isomorphism,
\[ \Phi : \cA \times_S S' \iso X \times_S S' \]
which is defined by $\Phi = (\rho \circ (\id \times \iota), \pi_2)$ where $\rho : \cA \times_S X \to X$ is the action. By definition, this is equivariant for the $\cA$-action on the left. However, $\cA \times_S S'$ also admits $G$-action ``on the right'' arising from the $G$-action on the $S'$ factor. The isomorphism $\Phi$ is also equivariant for this second action in the following sense. It is most convenient to describe these actions in terms of the functor of points. In this notation, for an $S$-scheme $T$, the action ``on the right'' $G \acts (\cA \times_S S')$ is transformed under $\Phi$ as follows,
\[ \Phi : (a, g \cdot s') \mapsto (a g \cdot s', g \cdot s') = (g \cdot (a \cdot s'), g \cdot s') \]
Using commutativity in an essential way, we see that $\Phi$ intertwines the ``right'' $G$-action on $\cA \times_S S'$ with the diagonal action on $X \times_S S'$. Therefore, $\Phi$ intertwines the ``right'' $G$-action on $X \times_S S'$ given by acting on $S'$, with the anti-diagonal action on $\cA \times_S S'$ meaning that
\[ \Phi :  (g^{-1} \cdot a, g \cdot s') \mapsto (a \cdot s', g \cdot s').  \]
Observe that for this ``right'' action, $(X \times_S S') / G = X \times_S (X'/G) = X$ because $S' \to S$ is a $G$-torsor. Therefore, we get an isomorphism
\[ (A' \times_k S') / G = (\cA \times_S S') / G \xrightarrow{\Phi} (X \times_S S')/G = X \]
with $G$ acting on $A' \times_k S'$ via the desired action.
\end{proof}

\section{Proofs of the main results}

The goal of this section is to prove a generalization of Theorem~\ref{thm:main} (see Theorem~\ref{thm:factoringthroughAV}). Given a morphism $f : X \rightarrow Y$, we say that $f$ \textit{contracts} a subvariety $F \subset X$ if the map $F \to f(F)$ drops in dimension. The main technical result that we need concerns varieties equipped with a fibration as well as a morphism $f : X \to A$ to an abelian variety not contracting the fiber:

\begin{theorem}\label{thm:GeneralBirDecomposition}
Let $X$ be a smooth variety equipped with maps $\psi : X \to S$ and $f : X \to A$ such that $\psi$ is proper, $f$ does not contract the general fiber of $\psi$, and $K_X = \psi^* L$. Then there exists a quotient $q : A \to B$ to an abelian variety $B$ of dimension equal to the relative dimension of $\psi$ and a birational map $X \birat (B' \times Z)/G$ making the diagram
\begin{center}
    \begin{tikzcd}
    X \arrow[d, dashed] \arrow[r, "f"] & A \arrow[d, "q"] 
    \\
    (B' \times Z)/G \arrow[r] & B 
    \end{tikzcd}
\end{center}
commute. Here, $B' \to B$ is an isogeny with kernel $G$ and $Z$ is a smooth variety with a $G$-action.
\end{theorem}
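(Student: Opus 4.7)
The plan is to reduce to Proposition~\ref{prop:AntiDiagonalQuotient} by constructing, after birational modification, a smooth abelian scheme birational to $X$ over an open of $S$, together with a finite surjective morphism to a constant abelian scheme. The first step is to extract the target abelian variety $B$ and the quotient $q \colon A \to B$. Since $K_X = \psi^{*} L$, any general fiber $F$ of $\psi$ satisfies $K_F = 0$, and in particular $\kappa(F) = 0$. The non-contraction hypothesis makes $f|_F$ generically finite onto its image, so Ueno's theorem implies that $f(F)$ is a translate of a fixed abelian subvariety $B_0 \subset A$ (fixed by rigidity as $F$ varies), and Kawamata's characterization of abelian varieties identifies $F$ birationally with $\mathrm{Alb}(F)$, which is isogenous to $B_0$. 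Fixing a polarization on $A$ and dualizing then yields a quotient $q \colon A \iso A^{\vee} \twoheadrightarrow B_0^{\vee} =: B$, so that $B_0 \to B$ is an isogeny and $\dim B$ equals the relative dimension of $\psi$.

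For the main construction, I would shrink $S$ to a smooth Zariski open $S_0$ over which $\psi$ is smooth with connected fibers, and (after a harmless \'etale base change to trivialize a section) form the relative Albanese $\mathcal{A} \colonequals \mathrm{Alb}_{X_{S_0}/S_0}$. Kawamata's theorem applied fiberwise implies that the natural map $X_{S_0} \to \mathcal{A}$ is a birational morphism. The universal property of the Albanese produces an $S_0$-morphism of abelian schemes $\mathcal{A} \to A \times S_0$ through which $f$ factors; composing with $q$ gives a morphism $\mathcal{A} \to B \times S_0$ which on fibers is the composition of isogenies $\mathrm{Alb}(F) \to B_0 \to B$, hence finite and surjective. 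Proposition~\ref{prop:AntiDiagonalQuotient} now applies to $\mathcal{A} \to S_0$ and this finite morphism, producing an isomorphism $\mathcal{A} \cong (B' \times S_0')/G$ with the antidiagonal action, where $B' \to B$ is an isogeny with kernel $G$ and $S_0' \to S_0$ is a $G$-torsor. Setting $Z \colonequals S_0'$ — smooth because $S_0$ is smooth and $S_0' \to S_0$ is \'etale — and composing with the birational equivalence $X \birat \mathcal{A}$ yields the desired $X \birat (B' \times Z)/G$; commutativity of the diagram follows by construction, since $\mathcal{A} \to B$ is obtained precisely as $q$ composed with the Albanese factoring of $f$.

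The principal technical obstacle lies in the second paragraph: realizing the relative Albanese $\mathcal{A}$ globally as an abelian scheme over an open of $S$, and exhibiting the factorization of $f$ through it with all the required properties. The fiberwise picture is clean from the combination of Kawamata and Ueno, but assembling these fibers into a global morphism requires some care, namely passing to an \'etale cover to secure a section and verifying that the composed morphism $\mathcal{A} \to B \times S_0$ is genuinely finite (using generic flatness over $S_0$ together with the fiberwise isogeny property established in the first paragraph).
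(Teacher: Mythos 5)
Your overall strategy matches the paper's: identify the fixed abelian subvariety $B_0\subset A$ swept out by the fibers via Kawamata--Ueno, dualize along a polarization to get $q\colon A\to B=B_0^\vee$, produce a finite surjective map to the constant family $B\times S_0$, and feed everything into Proposition~\ref{prop:AntiDiagonalQuotient}. The divergence --- and the gap --- is in how you arrange for Proposition~\ref{prop:AntiDiagonalQuotient} to apply. You only establish that a general fiber $F$ is \emph{birational} to $\mathrm{Alb}(F)$, which forces you to route the argument through a relative Albanese $\mathcal{A}$, and to get an Albanese \emph{morphism} $X_{S_0}\to\mathcal{A}$ you invoke ``a harmless \'etale base change to trivialize a section.'' That base change is not harmless: if you pass to an \'etale cover $\widetilde S_0\to S_0$ to find a section, Proposition~\ref{prop:AntiDiagonalQuotient} then produces a decomposition of $X_{\widetilde S_0}$, not of $X_{S_0}$, and descending it would replace $G$ by an extension involving the deck group of $\widetilde S_0/S_0$ --- which need no longer be the kernel of an isogeny $B'\to B$. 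As written this step does not yield the asserted conclusion. (The detour is in principle repairable by working with the canonical Albanese \emph{torsor}, which needs no section --- this is exactly the role of Lemma~\ref{lemma:TorsorOverPic^0} --- but you do not say this, and you would still need to check that the fiberwise birational map assembles into a birational morphism $X_{S_0}\to\mathcal{A}$.)

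The paper avoids the Albanese entirely by one extra observation that your argument is missing: since $X$ is smooth and $K_X=\psi^*L$, adjunction gives $K_F=0$, so each smooth fiber $F$ is a smooth \emph{minimal} model birational to an abelian variety, hence \emph{isomorphic} to one by Lemma~\ref{lemma:minimal_birat_to_abelian_var}. Consequently $X_U\to U$ is itself a smooth proper family of abelian varieties (a torsor by Lemma~\ref{lemma:TorsorOverPic^0}, with no section required), and Proposition~\ref{prop:AntiDiagonalQuotient} applies directly to $X_U\to U$ with the finite surjective map $X_U\to A\to A^\vee\to B_0^\vee$; this yields $X_U\cong (B'\times U')/G$ on the nose, with $U'=f^{-1}(0\times U)\subset X_U$ playing the role of your $S_0'$. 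Two smaller remarks: you should shrink $S_0$ so that \emph{every} fiber over it is uncontracted (the hypothesis only concerns the general fiber; the paper handles this with the rigidity factorization of $X\to A/B_0$ through $S$ and semicontinuity of fiber dimension), and you leave $Z=S_0'$ quasi-projective, whereas the paper takes a smooth $G$-equivariant compactification --- harmless for the literal statement, but needed for the later applications (Corollary~\ref{corollary:G-cover} and the general-type conclusions).
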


\begin{proof}
We will first show that every smooth fiber over the regular locus $S^{\text{reg}}$ is isomorphic to a fixed abelian variety and maps to $A$ as an \etale cover onto a translate of a fixed abelian subvariety $B_0 \subset A$. After Stein factorizing, we may assume that $\psi$ has geometrically connected fibers and $S$ is normal.

Let $F$ be a general fiber of $\psi$. Since $f$ does not contract $F$, we claim that no fiber of $\psi$ can be contracted by rigidity, and furthermore, the image of each fiber of $\psi$ under the map $f$ is a translate of a fixed abelian subvariety. This follows from a standard argument using the fact that an abelian variety admits at most countably many abelian subvarieties (c.f. \cite[Thm. 13]{Kawamata81}). Here we present an alternative approach. Using adjunction, $\kappa(F) = 0$ so by \cite[Thm.~1]{Kawamata81} the map $F \to \Alb_F$ is surjective and hence the image of $F$ in $A$ is a translate of an abelian subvariety, say $B_0 \subset A$ where $\dim B_{0} = \dim F$. The composition $X \to A \to A / B_0$ then contracts $F$, so by rigidity it factors:
\begin{center}
\begin{tikzcd}
X \arrow[r, "f"] \arrow[d, swap, "\psi"] & A \arrow[r] & A / B_0 
\\
S \arrow[rru, dashed]
\end{tikzcd}
\end{center}
Restricting to the regular locus of $S$, the rational map $S^{\text{reg}} \to A / B_0$ is a morphism since $A / B_0$ is an abelian variety. Hence, $X \to A \to A / B_0$ contracts every fiber over the regular locus. By uppersemicontinuity of the fiber dimension, all fibers of $\psi$ map surjectively onto translates of $B_0$. For every smooth fiber $F$ we know by adjunction that $K_F = 0$ so $F$ is minimal. From our assumption that $f$ does not contract $F$ and \cite[Thm.~1]{Kawamata81}, the composition $F \to \Alb_F \to A$ is generically finite onto its image and hence $F \to \Alb_F$ is birational. Since $F$ is smooth and minimal, by Lemma~\ref{lemma:minimal_birat_to_abelian_var} the map $F \to \Alb_F$ is an isomorphism so $F \to A$ as an \etale cover of its image, which is a translate of $B_0$. In particular, every abelian variety fiber of $\psi$ is isogenous to $B_0$. By countability of isogeny classes, $\psi$ is isotrivial over $S^{\reg}$.
\par
Let $U \subset S$ be an open such that $\psi_U$ is smooth; note that $U$ exists by generic smoothness since $X$ is smooth. However, the map $f : X_U \to A \times U$ may not be surjective. To rectify this, we apply the following trick from the proof of \cite[Thm.~5.10]{HS21(1)}. First take the dual morphism $A^{\vee} \rightarrow B_0^\vee$ of the inclusion. To get the required morphism, we choose an ample class on $A$ which induces a polarization $A \to A^\vee$. Hence the composition
\[ X_U \to A \rightarrow A^{\vee} \rightarrow B_{0}^\vee \]
satisfies the required hypotheses to apply Theorem~\ref{prop:AntiDiagonalQuotient} since $B_0 \to B_0^{\vee}$ is an isogeny. Setting $B = B_0^\vee$, we then get a finite surjective map $X_U \to A \times U$ over $U$ and $X_U \to U$ is a smooth proper morphism whose fibers are abelian varieties. Then Proposition~\ref{prop:AntiDiagonalQuotient} proves that $X_U \cong (B' \times U') / G$ for an isogeny $B' \to B$ with kernel $G$.
\par
Now we choose a smooth equivariant compactification $U' \embed Z$, meaning $Z$ is smooth and projective with a $G$-action and $U' \embed Z$ is an equivariant open embeding. This relies on an equivariant version of Nagata compactification, which holds only for finite group actions. Given a finite group action $G \acts X$, consider the scheme-theoretic image of $X \to \prod_{g \in G} \overline{X}$ given by applying $g$. Then one can embed $X \embed \overline{X}$ for any given Nagata compactification \cite{Nagata63} and resolve equivariantly using the existence of a functorial resolution of singularities \cite[Prop.~3.9.1]{Kollar09}. From the equivariant compactification above, we have
\[ X \birat X_U \iso (B' \times U')/G \embed (B' \times Z)/G \]
where $(B' \times Z)/G$ is a smooth projective variety since $Z$ is smooth projective and $G \acts A$ freely.
\end{proof}

\begin{corollary}\label{corollary:G-cover}
In the above case, $X$ admits a finite \etale $G$-cover $X' \to X$ such that $X'$ is equivariantly birational to $A' \times Z$, where $Z$ is general type and $G$ is a quotient of $\Z^{2g}$. 
\end{corollary}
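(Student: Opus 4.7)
My plan is to produce $X'$ as a finite étale $G$-cover of $X$ by transferring an evident $G$-cover across the birational equivalence $X \birat Y \colonequals (B' \times Z)/G$ furnished by Theorem~\ref{thm:GeneralBirDecomposition}. The first observation is that $G = \ker(B' \to B)$ acts freely on $B'$ by translation, so the diagonal action $g \cdot (b',z) = (g^{-1} \cdot b', g \cdot z)$ on $B' \times Z$ is free irrespective of the $G$-action on $Z$. Consequently, $Y$ is a smooth projective variety and $B' \times Z \to Y$ is a finite étale $G$-cover.

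The key step is to transfer this cover across $X \birat Y$. Since $X$ and $Y$ are both smooth and projective in characteristic zero, the categories of finite étale covers of $X$ and $Y$ are equivalent; this is a classical consequence of Zariski--Nagata purity together with resolution of singularities, which yields an isomorphism $\pi_1^{\et}(X) \cong \pi_1^{\et}(Y)$ compatible with Galois groups. Transporting the $G$-cover $B' \times Z \to Y$ across this equivalence produces the desired finite étale $G$-cover $X' \to X$, with $X'$ equivariantly birational to $B' \times Z$ since the $G$-action is determined by the cover structure.

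It remains to identify $G$ and to verify that $Z$ is of general type. Since $B' \to B$ is an isogeny of $g$-dimensional abelian varieties, $G$ is the cokernel of the induced injection $\pi_1(B') \hookrightarrow \pi_1(B) \cong \Z^{2g}$ on topological fundamental groups, hence a quotient of $\Z^{2g}$. For the general-type claim, invoke the standing Kodaira dimension hypothesis $\kappa(X) = \dim X - g$ (guaranteed in the applications of interest): étaleness of $X' \to X$ gives $\kappa(X') = \dim X - g$, and by birational invariance of Kodaira dimension, $\kappa(B' \times Z) = \dim X - g$. Iitaka's product formula then yields $\kappa(B' \times Z) = \kappa(B') + \kappa(Z) = \kappa(Z)$, so $\kappa(Z) = \dim X - g = \dim Z$, confirming that $Z$ is of general type. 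The main obstacle is the transfer step via birational invariance of $\pi_1^{\et}$ for smooth projective varieties; once this input is in hand, the remaining verifications on $G$ and on $\kappa(Z)$ are essentially formal.
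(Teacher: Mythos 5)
Your proof is correct and follows essentially the same route as the paper: both transfer the finite \etale $G$-cover $B' \times Z \to (B' \times Z)/G$ across the birational equivalence of smooth projective varieties via the birational invariance of the \etale fundamental group. Your additional verifications that $G$ is a quotient of $\Z^{2g}$ and that $Z$ is of general type (via the Kodaira dimension count) are exactly the details the paper supplies elsewhere (in the proof of Theorem~\ref{thm:FiniteCoverProduct}), so nothing is missing.
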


\begin{proof}
The birational equivalence $X \birat (A' \times Z)/G$ of smooth projective varieties induces an equivalence of \etale fundamental groups. Therefore, the finite \etale cover $A' \times Z \to (A' \times Z)/G$ extends to a birationally equivalent finite \etale cover $X' \to X$.
\end{proof}

\begin{remark}
This proves Theorem \ref{thm:FiniteCoverProduct} below in the case that $X$ is a smooth minimal model and $\psi$ is the Iitaka fibration. We will need to do a little more work to extract the general case. 
\end{remark}

\begin{theorem}\label{thm:FiniteCoverProduct}
Let $X$ be an $n$-dimensional smooth projective variety. Suppose there exists a morphism $f : X \to A$ to an abelian variety $A$ such that the general fiber of the Iitaka fibration is not contracted by $f$. Then there exists a $(\dim X - \kappa(X))$-dimensional quotient $q : A \to B$ and a birational map $X \birat (B' \times Z)/G$ making the diagram
\begin{center}
    \begin{tikzcd}
    X \arrow[d, dashed] \arrow[r, "f"] & A \arrow[d, "q"] 
    \\
    (B' \times Z)/G \arrow[r] & B 
    \end{tikzcd}
\end{center}
commute. Here, $B' \to B$ is an isogeny with kernel $G$ and $Z$ is a smooth variety of general type with a (not necessarily free) $G$-action.
\end{theorem}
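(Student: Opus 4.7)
My plan is to reduce Theorem~\ref{thm:FiniteCoverProduct} to Theorem~\ref{thm:GeneralBirDecomposition} by passing to a good minimal model and then to a smooth resolution compatible with it. The principal obstacle is that the hypothesis $K_X = \psi^* L$ of Theorem~\ref{thm:GeneralBirDecomposition} need not hold on $X$ itself, so I need to recover this structure on a birational model.

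First I verify that the general fiber $F$ of the Iitaka fibration of $X$ is birational to an abelian variety. Indeed, $\kappa(F) = 0$ by easy adjunction, and the hypothesis that $f$ does not contract $F$ says the composition $F \to A$ is generically finite; then \cite[Thm.~1]{Kawamata81} gives that $F \to \Alb_F$ is birational. Applying \cite[Thm.~4.4]{Lai11} (as in Lemma~\ref{lemma:existence-gmm}), $X$ admits a good minimal model $X^m$, on which the Iitaka fibration is realized as a morphism $\psi^m : X^m \to X^{\can}$ with $K_{X^m} = (\psi^m)^* H$ for some ample $\Q$-Cartier divisor $H$. I then take a smooth resolution $S$ of $X^{\can}$ and a common smooth projective $\tilde X$ dominating both $X$ and $X^m$, chosen so that $\tilde X \to X^m$ is an isomorphism over the smooth locus of $X^m$. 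This produces a proper morphism $\tilde \psi : \tilde X \to S$ birational to the Iitaka fibration of $X$ and a morphism $\tilde f : \tilde X \to A$ which still does not contract the general fiber of $\tilde \psi$.

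Next I adapt the proof of Theorem~\ref{thm:GeneralBirDecomposition} to $\tilde X$. In that proof, the hypothesis $K_X = \psi^* L$ was used only to conclude (via adjunction) that smooth fibers of $\psi$ have trivial canonical divisor, so that Lemma~\ref{lemma:minimal_birat_to_abelian_var} identifies them as smooth abelian varieties. I replace this ingredient as follows: by generic smoothness, choose an open $U \subset S$ over which $\tilde \psi$ is smooth and $\tilde X \to X^m$ is an isomorphism; then $\tilde X_U \cong X^m_U$, and the smooth fibers over $U$ inherit trivial canonical bundle from $K_{X^m} = (\psi^m)^* H$. Combined with their being birational to abelian varieties (from the first step), Lemma~\ref{lemma:minimal_birat_to_abelian_var} again yields that these fibers are smooth abelian varieties. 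From this point, the proof of Theorem~\ref{thm:GeneralBirDecomposition} applies verbatim: rigidity supplies a fixed abelian subvariety $B_0 \subset A$ of dimension $\dim F = \dim X - \kappa(X)$ with every fiber mapping to a translate of $B_0$, the polarization trick produces a finite surjective map $\tilde X_U \to B \times U$ with $B = B_0^{\vee}$, Proposition~\ref{prop:AntiDiagonalQuotient} gives the anti-diagonal quotient structure $(B' \times U')/G$, and equivariant compactification of $U'$ yields $\tilde X \birat (B' \times Z)/G$ compatibly with the map to $A$. Since $X \birat \tilde X$, the decomposition and the commutative diagram transfer to $X$. Finally, $Z$ is of general type because it is birational to a $G$-cover of $S$, and $S$ is a smooth model of $X^{\can}$, which is of general type by construction of the canonical model.

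The main obstacle is reconciling the three models $X$, $X^m$, and $\tilde X$: the good minimal model carries the adjunction identity needed to identify smooth fibers as abelian varieties but is singular, while the resolution $\tilde X$ regains smoothness but must be chosen so that its smooth fibers still match those of $\psi^m$. The key observation making this work is that every smooth fiber of $\psi^m$ necessarily lies in the smooth locus of $X^m$, so requiring $\tilde X \to X^m$ to be an isomorphism over this smooth locus suffices to make the adjunction argument transfer cleanly across models.
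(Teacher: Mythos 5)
Your overall strategy is the same as the paper's (pass to a good minimal model, identify the general Iitaka fibers as abelian varieties, then run the machinery of Theorem~\ref{thm:GeneralBirDecomposition} and Proposition~\ref{prop:AntiDiagonalQuotient} over an open of the base), but there is a genuine gap at the step where you produce the open set $U$. You ask for $U \subset S$ over which $\tilde\psi$ is smooth \emph{and} $\tilde X \to X^m$ is an isomorphism; the second condition forces $X^m_U$ to be smooth, i.e.\ it presupposes that $\Sing{X^m}$ does not dominate the base of the Iitaka fibration. This is not automatic: $X^m$ is terminal and $\Q$-factorial, so its singular locus only has codimension $\ge 3$, and when the fiber dimension $n - \kappa(X)$ is at least $3$ the singular locus can a priori dominate $S$, in which case \emph{every} fiber of $\psi^m$ is singular (fiber smooth $+$ base regular $+$ flat would force the total space to be regular along that fiber). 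Your closing ``key observation'' --- that every smooth fiber of $\psi^m$ lies in the smooth locus of $X^m$ --- is true but addresses the wrong implication: generic smoothness applies to $\tilde\psi$ (smooth total space), not to $\psi^m$, and the existence of any smooth fiber of $\psi^m$ is precisely what needs to be proved. The paper closes this gap by analyzing the possibly singular general fiber $F$ of $\phi : X^{\min} \to S$ directly: $F$ is reduced with canonical singularities and numerically trivial canonical class by \cite[Lem.~5.17]{KM98} and adjunction, the Stein factorization of the generically finite map $F \to B_0$ together with \cite{KV80} produces an abelian variety $\wt{B}_0$ \etale over $B_0$ with $F \to \wt{B}_0$ birational, and Lemma~\ref{lemma:minimal_birat_to_abelian_var} then shows $F \cong \wt{B}_0$ --- so the general fiber is smooth \emph{a posteriori}, and only then does one obtain the open $U$ over which $\phi_U$ is a smooth family of abelian varieties. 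Your argument can be repaired by substituting exactly this fiberwise analysis for the identification $\tilde X_U \cong X^m_U$.

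A secondary issue: your justification that $Z$ is of general type --- ``$Z$ is birational to a $G$-cover of $S$, and $S$ is of general type by construction of the canonical model'' --- does not work. The base of an Iitaka fibration need not be of general type (an elliptic surface over $\P^1$ with $\kappa = 1$ already gives a counterexample), and in any case $U' \to U$ is only \etale over an open set, so the compactified cover $Z \to S$ may ramify along the boundary and its Kodaira dimension is not controlled by that of $S$. The correct argument, as in the paper, is to use that $B' \times Z \to (B' \times Z)/G$ is finite \etale and $(B'\times Z)/G$ is birational to $X$, so $\kappa(Z) = \kappa(B' \times Z) = \kappa(X)$, while $\dim Z = \dim X - \dim B' = \kappa(X)$.
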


\begin{proof}
By the same proof as in Lemma~\ref{lemma:existence-gmm}, our assumption on $f$ implies that the general fiber $F$ of the Iitaka fibration $\phi : X \rat S$ is birational to an abelian subvariety $B_0 \subset A$. So there exists a good minimal model $X^{\min}$ of $X$, which implies that the Iitaka fibration $\phi : X^{\min} \to S$ is a morphism. We would like to apply Theorem~\ref{thm:GeneralBirDecomposition} directly to $\psi$; however $X^{\min}$ may not be smooth. Note that $X^{\min}$ is canonical, so we know that the general fiber $F$ of $\phi$ is reduced and has canonical singularities \cite[Lem.~5.17]{KM98}. Instead of directly imitating the proof of Theorem \ref{thm:GeneralBirDecomposition}, we will directly analyze the general fiber $F$ and show that it is an abelian variety. Applying the Stein factorization to $F \to \wt{B}_0 \to B_0$, we have a normal variety $\wt{B}_0$ with $Y_0 \to \wt{B}_0$ birational. Hence $\kappa(\wt{B}_0) = 0$ so by \cite{KV80} (c.f. \cite[Thm. 4]{Kawamata81}) the finite map $\wt{B}_0 \to B$ is an \etale map and $\wt{B}_0$ is an abelian variety. By Lemma~\ref{lemma:minimal_birat_to_abelian_var}, we conclude that $F \iso \wt{B}_0$ is an isomorphism.

Let $U \subset S^{\reg}$ be an open subset such that $\phi_{U} : X^{\min}_U \to U$ is flat and every fiber has at worst canonical singularities. Then every fiber is an abelian variety by the above argument, so $\phi_{U}$ is smooth. We can then apply Theorem~\ref{thm:GeneralBirDecomposition} to the morphism $f : X \to A$ in order to obtain the requisite birational map
\[ X \birat X^{\min} \birat X^{\min}_U \birat (B' \times Z)/G \]
where these objects are the same as in Theorem~\ref{thm:GeneralBirDecomposition}. Furthermore, the dimension of $B'$ is equal to the relative dimension of the Iitaka fibration, which is $\dim X - \kappa(X)$. Finally, to observe that $Z$ is of general type, one can use the birational equivalence and the fact that the quotient map by $G$ is a finite \etale covering to conclude that $\kappa(Z) = \kappa(X)$ and $\dim Z = \dim X - \dim B' = \kappa(X)$.
\end{proof}

Now we are ready to prove the main results from the introduction.

\begin{proof}[Proof of Theorem~\ref{thm:KodairaDimFiniteCoverProduct}]
Let $\phi : X \dashrightarrow S$ be the Iitaka fibration. As in the first paragraph of the proof of Theorem~\ref{thm:FiniteCoverProduct}, we may pass to $X^{\min} \rightarrow S$ and assume that the general fiber is a smooth abelian variety. Any smooth fiber is minimal by adjunction, and Lemma \ref{non_contraction} tells us that the general fibers of $\phi : X^{\min} \to S$ are not contracted by the induced map $f^{\min} : X^{\min} \to A$. Therefore, we may apply Theorem~\ref{thm:FiniteCoverProduct} to conclude.
\end{proof}

Next, we prove a generalization of Theorem~\ref{thm:main}.

\begin{theorem} \label{thm:factoringthroughAV}
Let $X$ be an $n$-dimensional minimal smooth projective variety with $\kappa(X) = n-g$, and let $f \colon X \rightarrow A$ be a morphism to an abelian variety $A$. Then the following are equivalent:
\begin{enumerate}[label={\upshape(\arabic*)}]
    \item There exist one-forms $\omega_{1}, \ldots, \omega_{g} \in H^{0}(A, \Omega^{1}_{A})$ such that $f^{\ast}\omega_{1}, \ldots, f^{\ast}\omega_{g}$ are PLI;
    \item $X$ admits a smooth morphism $\varphi \colon X \rightarrow B$ to an abelian variety $B$ of dimension $g$, such that $\varphi$ fits into the commutative diagram
    \begin{center}
    \begin{tikzcd}
    X \arrow[d, swap, "\varphi"] \arrow[r, "f"] & A \arrow[ld, "q"] 
    \\
    B
    \end{tikzcd}
    \end{center}
\end{enumerate}
When either of these hold, we get an isomorphism $X \iso (B' \times Z)/G$ compatible with $\varphi$, where $B' \to B$ is an isogeny with kernel $G$ and $Z$ is a smooth minimal model of general type with a $G$-action and $G$ acts diagonally on the product $B'\times Z$.
\end{theorem}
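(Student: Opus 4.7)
The direction $(2) \Rightarrow (1)$ is immediate. Given a factorization $\varphi = q \circ f$ with $\varphi$ smooth, choose a basis $\eta_1, \ldots, \eta_g$ of $H^0(B, \Omega_B^1)$ and set $\omega_i \colonequals q^* \eta_i \in H^0(A, \Omega_A^1)$. Then $f^* \omega_i = \varphi^* \eta_i$. Since $\varphi$ is smooth, $\varphi^* \Omega_B^1 \hookrightarrow \Omega_X^1$ is a rank-$g$ subbundle, and the $\eta_i$ trivialize $\Omega_B^1$ (as $B$ is abelian); together these force the $\varphi^* \eta_i$ to be PLI in $\Omega_X^1$.

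For $(1) \Rightarrow (2)$, first apply Theorem~\ref{thm:KodairaDimFiniteCoverProduct} to obtain a $g$-dimensional quotient $q : A \to B$ together with a birational equivalence $X \birat Y \colonequals (B' \times Z)/G$ making the required square commute, where $B' \to B$ is an isogeny with kernel $G$ and $Z$ is a smooth projective variety of general type with a (possibly non-free) $G$-action. Setting $\varphi \colonequals q \circ f : X \to B$ gives the desired morphism. What remains is to upgrade the birational equivalence to an isomorphism of the form $X \iso (B' \times Z^{\min})/G$ for a suitable $G$-equivariant minimal model $Z^{\min}$ of $Z$, and to deduce the smoothness of $\varphi$.

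The plan is to first replace $Z$ by its $G$-equivariant good minimal model $Z^{\min}$, which exists by the $G$-equivariant MMP for the general-type variety $Z$. The quotient $Y^{\min} \colonequals (B' \times Z^{\min})/G$ is then smooth (by freeness of the diagonal $G$-action through the $B'$-factor) with nef canonical class, hence a smooth minimal model in the birational class of $X$. To identify $X$ with $Y^{\min}$, we pass to the finite étale $G$-cover $\pi : X' \to X$ supplied by Corollary~\ref{corollary:G-cover}. Since $\pi$ is étale and $X$ is smooth minimal, $X'$ is also smooth minimal, and $X'$ is $G$-equivariantly birational to $B' \times Z^{\min}$. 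The map $X' \to B'$ lifting $\varphi$ through the isogeny is then a morphism from a smooth minimal model to an abelian variety with general-type fibers birational to $Z^{\min}$.

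The main obstacle is upgrading the birational equivalence $X' \birat B' \times Z^{\min}$ of smooth minimal models into an equivariant isomorphism. The plan is to rule out intermediate flops via an MMP analysis: any flopping curve would be a $K$-trivial rational curve lying in a fiber over $B'$, but those fibers are birational to the general-type minimal model $Z^{\min}$, which admits no such curves. Once $X' \iso B' \times Z^{\min}$ equivariantly is established, taking the $G$-quotient gives $X \iso Y^{\min}$, and smoothness of $\varphi$ follows from smoothness of the projection $B' \times Z^{\min} \to B'$ together with the freeness of the diagonal $G$-action on the $B'$-factor.
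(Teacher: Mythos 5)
Your skeleton follows the paper's: obtain the birational decomposition from Theorem~\ref{thm:KodairaDimFiniteCoverProduct}, pass to the \'etale $G$-cover $X' \to X$ of Corollary~\ref{corollary:G-cover}, run a $G$-equivariant MMP on $Z$, and compare minimal models. However, your key step---ruling out all intermediate flops on the grounds that ``the general-type minimal model $Z^{\min}$ admits no $K$-trivial rational curves''---is false. For a minimal model of general type, $K_{Z^{\min}}$ is big and nef but in general not ample, and the curves contracted by the morphism to the canonical model are exactly $K$-trivial; in dimension $\ge 3$ such flopping curves genuinely occur, and distinct minimal models in a fixed general-type birational class are connected by nontrivial flops. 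So the birational map $X' \birat B' \times Z^{\min}$ between minimal models cannot be promoted to an isomorphism by this argument. The paper instead accepts that this map is a composition of flops (by \cite{Kawamata08}), invokes the argument of \cite[Thm.~5.10]{HS21(1)} to show that every flop of the product $B' \times Z^{\min}$ is induced by a flop of the factor $Z^{\min}$, and then performs those $G$-equivariant flops to produce a possibly different minimal model $Z^+$ with $X' \iso B' \times Z^+$. The $Z$ appearing in the final statement is this $Z^+$, not the model you first constructed.

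A second, related gap: you assert that $(B' \times Z^{\min})/G$ is smooth because the diagonal $G$-action is free through the $B'$-factor. Freeness only gives that the quotient of a \emph{smooth} variety is smooth, but $Z^{\min}$, being a minimal model of a general-type variety of dimension $\ge 3$, may have terminal singularities, so $B' \times Z^{\min}$ need not be smooth and neither need its quotient. In the paper's argument the smoothness of the relevant model is a \emph{consequence}: once $X' \iso B' \times Z^+$ is established and $X'$ is known to be smooth (as an \'etale cover of the smooth $X$), one deduces that $Z^+$ is smooth. You cannot assume it at the outset. Your $(2)\Rightarrow(1)$ argument and the reduction to the product decomposition are fine; the proof needs to be repaired at the flop-comparison step along the lines above.
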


\begin{proof}
(2) $\implies$ (1) is immediate. Conversely, suppose (1) holds for $X$. By Lemma~\ref{non_contraction} and then Theorem~\ref{thm:FiniteCoverProduct} applied to $f : X \to A$, there exists a birational map
\[ X \birat (B' \times Z)/G \]
where $B' \rightarrow B$ is an isogeny with kernel $G$ and $Z$ has general type. By Corollary~\ref{corollary:G-cover}, there is an \'etale $G$-cover $X'$ of $X$ admitting a $G$-equivariant birational map $\alpha\colon X'\to A'\times Z$, where $A'\to A$ is an isogeny with kernel $G$, $Z$ is a smooth $G$-variety of general type, and the $G$-action on $A'\times Z$ is diagonal. Moreover, $\alpha$ descends to a birational map $\overline{\alpha}\colon X\dashrightarrow (A'\times Z)/G$. By \cite{BCHM10} and \cite{Prokhorov21}, we can run a $G$-equivariant MMP with scaling along a $G$-invariant ample divisor to get a minimal model $Z^{\min}$ of $Z$ with a compatible $G$-action. Hence we get a birational map between two minimal models $X\dashrightarrow (A'\times Z^{\min})/G$, which is a composition of flops by \cite{Kawamata08}. In particular, we have an isomorphism between codimension one open subsets of $X$ and $(A'\times Z^{\min})/G$. Let $U$ denote the largest common open subvariety. Therefore the following diagram commutes
    \begin{center}
    \begin{tikzcd}
    X' \arrow[rr, dashed, "\alpha"] \arrow[d, "p_1"] && A'\times Z^{\min} \arrow[d, "p_2"]
    \\
    X \arrow[rr, dashed, "\overline{\alpha}"] && (A'\times Z^{\min})/G 
    \\
    & U \arrow[ul, hook, "i_1"] \arrow[ur, hook, "i_2"']
    \end{tikzcd}
    \end{center}
Note that $X'$ and $A'\times Z^{\min}$ are birational minimal models. Hence $\alpha$ is again a composite of flops and $p_1^{-1}(U)$ is isomorphic to $p_2^{-1}(U)$ via $\alpha$ since $p_i$ are \etale maps. An argument in the proof of \cite[Thm.~5.10]{HS21(1)} (c.f. \cite[Thm.~3.4]{Hao23}) shows that all flops of $A'\times Z^{\min}$ arise from flops of $Z^{\min}$. Hence $\alpha : p_1^{-1}(U) \iso p_2^{-1}(U) = A'\times V$ for some open $V \subset Z^{\min}$, using the fact that $U$ is chosen to be maximal among opens over which $\overline{\alpha}$ is defined. From the description of $p_2$, the $G$-action on $A'\times V$ is diagonal. Hence after any sequence of $G$-equivariant flops $Z^{\min} \birat Z^+$, we get a diagonal action on $X' \iso (A' \times Z^+)$. Since $X'$ is smooth, $Z^+$ must be smooth as well. Finally, smoothness of the map $\varphi : X \to B$ is immediate from commutativity of the diagram
\begin{center}
    \begin{tikzcd}
        X \arrow[rd, "\varphi"'] \arrow[r, "\sim"] & (B' \times Z)/G \arrow[d, "\pi_1"]
        \\
        & B
    \end{tikzcd}
\end{center}
and smoothness of $\pi_1$ which follows from the fact that $G$ acts freely on $B'$.
\end{proof}


\begin{remark}
Note that the isomorphism $X \iso (B' \times Z)/G$ is \textit{not} compatible with any map to $A$. Indeed, there may not even be a map to $A$ since $B'/G = B$ may only be isogenous to an abelian subvariety of $A$. Even if $G$ is trivial, the isomorphism may not be compatible with $f$ and the projection. For example, consider $X = E \times C$ where $E$ is an elliptic curve and $C$ is a genus $2$ curve with Jacobian $E \times E'$. Mapping to the Albanese $E \times E \times E'$, our construction gives the identity $\id : E \times C$. However, the map to the Albanese does not factor through the first projection $\mathrm{pr}_1 : X \to E$. 
\end{remark}

\begin{proof}[Proof of Theorem~\ref{thm:main}]
By applying Theorem~\ref{thm:factoringthroughAV} in the case that $f$ is the Albanese morphism and $A = \Alb_X$, we recover Theorem~\ref{thm:main}.
\end{proof}

\begin{example}
We cannot expect to have an \textit{isomorphism} $X \cong (B' \times Z)/G$ as opposed to a birational map in the conclusion of Theorem~\ref{thm:factoringthroughAV} since this implies that $X \to B$ is isotrivial. The following example demonstrates some complexity that can appear in the non-minimal case. Consider an embedding $\iota \colon E \hookrightarrow S$ of an elliptic curve $E$ into a smooth surface $S$ of general type. Now take the graph $\Gamma_{\iota} \subset E \times S$ and blow it up to obtain a threefold $X = \mathrm{Bl}_{\Gamma_{\iota}}(E \times X)$, which admits a smooth morphism to $E$. Indeed, the projection $\pi : X \to E$ is smooth because the fiber over $x \in E$ is the blowup of $S$ at the corresponding point $\iota(X)$ which is smooth. However, the fibers are not minimal; while they are birationally equivalent, they are not isomorphic. Thus, $\pi$ is not isotrivial, only birationally isotrivial. 
\end{example}

\begin{proof}[Proof of Theorem \ref{thm:smooth_map_to_simpleAV}]
(2) $\implies$ (1) is immediate. Now suppose (1) holds. In the notation of Lemma \ref{non_contraction}, we consider the map $F \to \Alb_F \to A$. Since $A$ is simple, this map must be either trivial or surjective. However, since $f^* \omega$ is nonvanishing, it cannot arise from the pullback of $\Alb_X \to Q_X$ by \cite[Thm.~2.1]{PS14}. Therefore, the map $\Alb_F \to \Alb_X \to A$ is nonzero so $F \to A$ is surjective. Via the assumption,
\[ \kappa(X) \ge \dim{X} - \dim{A} \]
we see that $\dim{F} \le \dim{A}$ and hence $F \to A$ is generically finite and the above inequalities are equalities. By the same argument as in Theorem~\ref{non_contraction}(i), the fibers $F$ are birational to abelian varieties. Following the proof of Theorem~\ref{thm:factoringthroughAV}, $X$ admits an etale cover $\tau \colon X' \to X$ with $X' \cong Z \times B$, where $B$ is an abelian variety and $Z$ is a variety of general type. By assumption, $\tau^{\ast} f^{\ast} \omega$ is nowhere vanishing. Consider the decomposition
\[ \tau^* f^* \omega = \pi_1^* \omega_Z + \pi_2^* \omega_B \]
if $\omega_B = 0$ then the nontriviality of the zero locus of $\omega_Z$, by \cite[Thm.~1]{PS14}, would contradict  the assumption that $f^* \omega$ is nowhere vanishing. Therefore, $\omega_B \neq 0$ which implies that the map $B \to A$ is nontrivial and hence smooth since $A$ is simple. Pulling back a basis of forms from $A$ along $\{ z \} \times B \rightarrow Z \times B \rightarrow X$ gives an independent collection of forms on $\{ z \} \times B$ for each $z \in Z$. Hence, those same forms pulled back to $X$ are PLI and the map $f$ is smooth. Finally, the Kodaira dimension equality and isotriviality follow from the \etale cover.
\end{proof}

\bibliographystyle{alpha}
\bibliography{refs.bib}

\footnotesize{
\textsc{Department of Mathematics, Columbia University, New York 10027} \\
\indent \textit{E-mail address:} \href{mailto:nathanchen@math.columbia.edu}{nathanchen@math.columbia.edu}

\textsc{Department of Mathematics, Stanford University, California 94305} \\
\indent \textit{E-mail address:} \href{mailto:bvchurch@stanford.edu}{bvchurch@stanford.edu}

\textsc{Department of Mathematics, KU Leuven, Belgium} \\
\indent \textit{E-mail address:} \href{mailto:feng.hao@kuleuven.be}{feng.hao@kuleuven.be}
}

\end{document}